\documentclass[12pt]{article}
\usepackage{amsfonts,amsthm}
\usepackage{amssymb,amsmath}
\usepackage{enumerate}
\usepackage{amsmath}

\usepackage[dvipsnames]{xcolor}
\usepackage{hyperref}
\hypersetup{
    colorlinks=true,
    citecolor=blue,
    linkcolor=blue,
    filecolor=magenta,
    urlcolor=blue,
    pdftitle={Overleaf Example},
    pdfpagemode=FullScreen,
    }

\input{amssym.def}

\newtheorem{Definition}{Definition}[section]
\newtheorem{Theorem}[Definition]{Theorem}
\newtheorem{Lemma}[Definition]{Lemma}
\newtheorem{Proposition}[Definition]{Proposition}
\newtheorem{Corollary}[Definition]{Corollary}
\newtheorem{Example}[Definition]{Example}
\newtheorem{Remark}[Definition]{Remark}

\newcommand{\be}{\begin{equation}}
\newcommand{\ee}{\end{equation}}

\begin{document}

\title{\bf On strongly $m$-$\Delta$-clean ring }
\author{\bf Saikat Das \footnote {e-mail : saikatofficial607@gmail.com} \ and ~\bf Sukhendu Kar  \footnote {e-mail : karsukhendu@yahoo.co.in} \\
{\small Department of Mathematics, Jadavpur University}\\
{\small 188, Raja S. C. Mallick Road, Kolkata - 700032, India.}}
\date{}
\maketitle

\begin{abstract}
Motivated by the recent study \cite{d1} of strongly $\Delta$-clean rings, we introduce and study strongly $m$-$\Delta$-clean rings. We establish their fundamental properties and characterize strongly $m$-$\Delta$-clean rings by lifting $m$-potent elements modulo $\Delta(R)$. We prove that every $m$-$\Delta$-clean ring is clean and that the factor ring of a strongly $m$-$\Delta$-clean ring modulo its Jacobson radical is reduced. Finally, we investigate corner rings, Morita context rings, and the relationships of these rings with $\Delta$-clean rings, local rings, semipotent rings, and strongly $m$-nil clean rings.\end{abstract}

\noindent
\textbf{Keywords :} Clean ring, $m$-clean ring, $\Delta$-clean ring, $m$-$\Delta$-clean ring.

\noindent
\textbf {AMS Subject Classification : } $16U99$, $16Z05$.

\section{Introduction} \label{intro}
Throughout this paper, we assume that all rings are associative with identity. Let $R$ be a ring. Suppose that $U(R)$, $Id(R)$, $J(R)$ and $Nil(R)$ are the group of units, set of idempotents, Jacobson radical and set of nilpotent elements of $R$, respectively. Recall that
\[
\Delta(R)=\{x\in R \ : \ x+u\in U(R)\text{ for all }u\in U(R)\}.
\]

The subset $\Delta(R)$ was considered by Lam \cite[Exercise 4.24]{l0} and was later investigated in detail by Leroy and Matczuk \cite{l1}. From \cite[Exercise 4.24]{l0}, it follows that
\begin{align*}
\Delta(R) & =\{x\in R \ : \ 1-ux\in U(R)\text{ for all }u\in U(R)\} \\
& =\{x\in R \ : \ 1-xu\in U(R)\text{ for all }u\in U(R)\}.
\end{align*}

 Note that $\Delta(R)$ is a subring of $R$ satisfying $J(R)\subseteq \Delta(R)$, and it is the largest Jacobson radical subring of $R$ which is closed under multiplication by units. In general, $\Delta(R)$ need not be an ideal of $R$.

The study of decomposition properties of ring elements has played an important role in modern ring theory. The notion of a clean ring was introduced by Nicholson \cite{n1}. A ring $R$ is called clean if every element of $R$ can be expressed as the sum of an idempotent and a unit. Since then, numerous generalizations of clean rings have been investigated, including strongly clean rings, strongly nil-clean rings, $m$-clean rings, and strongly $m$-nil clean rings \cite{k2,b1,p1,d2,d3}. These classes have attracted considerable attention due to their rich structural properties and their close connections with exchange rings, lifting problems and decomposition theory.

Motivated by the role of $\Delta(R)$, several authors have investigated decompositions involving elements of $\Delta(R)$. In \cite{k1}, Karabacak, Ko\c{s}an, Quynh, and Tai studied $\Delta U$-rings, while Danchev, Hasanzadeh, Javan, and Moussavi \cite{d1} introduced strongly $\Delta$-clean rings. A ring $R$ is called strongly $\Delta$-clean ring if every element can be expressed as the sum of an idempotent and an element of $\Delta(R)$ that commute with each other. The authors studied several fundamental properties of strongly $\Delta$-clean rings.

In this paper, we introduce and investigate strongly $m$-$\Delta$-clean ring by replacing idempotent element with $m$-potent element. The notion of strongly $m$-$\Delta$-clean ring naturally generalizes the notion of strongly $\Delta$-clean ring and provide a unified framework for studying decompositions involving $m$-potent element and the subring $\Delta(R)$.

In Section~$2$, we establish the fundamental properties of $m$-$\Delta$-clean and strongly $m$-$\Delta$-clean rings. We prove that every $m$-$\Delta$-clean ring is clean. We further prove that if $R$ is strongly $m$-$\Delta$-clean, then nilpotent elements belong to $\Delta(R)$ and the factor ring $R/J(R)$ is reduced. In addition, we investigate annihilator conditions and show that strongly $m$-$\Delta$-cleanness is inherited by corner rings.

In Section~$3$, we obtain several characterizations for strongly $m$-$\Delta$-clean rings. In particular, we characterize these rings in terms of lifting $m$-potent elements modulo $\Delta(R)$. We further investigate the relationships among $m$-$\Delta$-clean rings, local rings, semipotent rings, $\Delta$-clean rings and strongly $m$-nil clean rings. Finally, we study the behavior of strongly $m$-$\Delta$-clean rings in the setting of Morita context rings.

\section{On $m$-$\Delta$-clean rings and strongly $m$-$\Delta$-clean rings}
We begin by recalling some basic properties of $\Delta(R)$ from \cite[Lemma 1.1]{l1}.

\begin{Proposition} \cite[Lemma 1.1]{l1} \label{6p2.1}
For any ring $R$,

$(i)$ $\Delta(R)=\{r\in R \ : \  ru+1\in U(R) \ \mbox{for all} \ {u\in U(R)} \}$ 

$\hspace{.68in}  =  \{r\ \in R \ : \  ur+1\in U(R) \ \mbox{for all} \ {u\in U(R)}\}$.

$(ii)$ for any $r\in \Delta(R)$ and $u\in U(R)$, we have $ur,\,ru\in \Delta(R)$.

$(iii)$ $\Delta(R)$ is a subring of $R$.

$(vi)$ $\Delta(R)$ is an ideal of $R$ if and only if $\Delta(R)=J(R)$.

$(v)$ If $R =\displaystyle \prod_{i \in I} R_i$, then $\Delta(R) =\displaystyle \prod_{i \in I} \Delta(R_i)$.
\end{Proposition}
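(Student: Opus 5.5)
I will prove the five parts roughly in order, using only the definition $\Delta(R)=\{x : x+u\in U(R)\ \forall u\in U(R)\}$ and the two equivalent descriptions via $1-ux$ and $1-xu$ quoted from \cite[Exercise 4.24]{l0}.

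For $(i)$, I would note that $u\mapsto -u$ is a bijection of $U(R)$. So $1-ux\in U(R)$ for all units $u$ holds if and only if $1+ux\in U(R)$ for all units $u$, and similarly on the right. This gives both descriptions in $(i)$ at once.

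For $(ii)$, I would take $r\in\Delta(R)$ and $u,v\in U(R)$. Then
\[
ur+v=u(r+u^{-1}v)\in U(R),
\]
since $u^{-1}v\in U(R)$; hence $ur\in\Delta(R)$. Symmetrically, $ru+v=(r+vu^{-1})u\in U(R)$.

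For $(iii)$:
\begin{itemize}
\item $0\in\Delta(R)$ trivially.
\item Closure under negation uses $-x+u=-(x-u)$, with $-u$ a unit.
\item Closure under addition: for $x,y\in\Delta(R)$ and a unit $u$, the element $y+u$ is a unit, so $x+(y+u)$ is a unit.
\item Closure under products: for a unit $u$, write $1+xyu$. By $(ii)$, $yu\in\Delta(R)$, so I need $1+x w\in U(R)$ for $x,w\in\Delta(R)$. I would use the standard fact that $1+ab\in U(R)$ if and only if $1+ba\in U(R)$, but this alone does not finish the argument.
\end{itemize}

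The key step, and the main obstacle, is showing $1+xw\in U(R)$ for $x,w\in \Delta(R)$. My planned route is to prove first that $\Delta(R)$ is exactly the largest subset closed under multiplication by units whose elements are all quasi-regular in the relevant sense. Then I would show that $1+x w = (1+x)\,(1+(1+x)^{-1}x w)$ needs care. If that fails, I would fall back on the trick in Leroy--Matczuk: since $1+w\in U(R)$,
\[
xw=x(1+w)-x,
\]
and $x(1+w)\in\Delta(R)$ by $(ii)$. So $xw$ is a sum of elements of $\Delta(R)$, hence in $\Delta(R)$ by additive closure. This trick in fact settles the whole multiplicative closure, and I expect it to be the intended argument. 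I would also note $1\notin\Delta(R)$ unless $R=0$, so "subring" here means without identity.

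For $(iv)$, if $\Delta(R)=J(R)$ it is an ideal. Conversely, suppose $\Delta(R)$ is an ideal. For $x\in\Delta(R)$ and any $r\in R$ we have $rx\in\Delta(R)$, so $1+rx=1+1\cdot(rx)\in U(R)$ by $(i)$. Hence $x\in J(R)$, and together with $J(R)\subseteq\Delta(R)$ this gives equality.

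For $(v)$, units of $\prod R_i$ are exactly the tuples of units, and $x+u$ is a unit exactly when each coordinate $x_i+u_i$ is. Since the units $u$ range over all tuples $(u_i)$ with each $u_i\in U(R_i)$ independently, the condition splits coordinatewise, giving $\Delta(\prod R_i)=\prod\Delta(R_i)$.
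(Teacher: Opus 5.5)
Your proof is correct. The paper gives no proof of this proposition; it only cites Leroy--Matczuk, Lemma 1.1. So your argument stands as a self-contained verification of all five parts.

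In (iii), the identity $xw=x(1+w)-x$ is not a fallback: it is the complete argument for multiplicative closure. Here $1+w\in U(R)$ because $w\in\Delta(R)$, and $x(1+w)\in\Delta(R)$ by (ii). Present it as the proof, and drop the tentative ``planned route'' and the detour through $1+ab$ versus $1+ba$.

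A minor point on (i): you don't need Lam's exercise. Directly from the definition, $r+u=u(1+u^{-1}r)=(1+ru^{-1})u$, and $u\mapsto u^{-1}$ permutes $U(R)$, which gives both descriptions at once.
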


\begin{Definition}
Let $m \geq 2$ and $R$ be a ring. An element $r \in R$ is called $m$-$\Delta$-clean if $r = f + d$, where $f$ is $m$-potent ($f^m = f$) and $d \in \Delta(R)$. 

A ring $R$ is called $m$-$\Delta$-clean if every element of $R$ is $m$-$\Delta$-clean.
\end{Definition}

\begin{Definition}
Let $m \geq 2$ and $R$ be a ring. An element $r \in R$ is called strongly $m$-$\Delta$-clean if $r = f + d$, where $f$ is $m$-potent ($f^m = f$), $d \in \Delta(R)$ and $fd = df$. 

A ring $R$ is said to be strongly $m$-$\Delta$-clean if every element of $R$ is strongly $m$-$\Delta$-clean.
\end{Definition}

Observe that (strongly) $2$-$\Delta$-clean rings are precisely (strongly) $\Delta$-clean rings. Furthermore, every (strongly) $\Delta$-clean ring is (strongly) $m$-$\Delta$-clean ring for every integer $m \geq 2$. However, the converse does not hold, in general.

\begin{Example}
 For any prime $p$, $\mathbb{Z}_p$ is a strongly $m$-$\Delta$-clean ring but it is not a strongly $\Delta$-clean ring.\end{Example}

\begin{Lemma} \cite[Lemma 2.2]{d1} \label{ls2.5}
For any ring $R$, $U(R) + \Delta(R)= U(R)$
\end{Lemma}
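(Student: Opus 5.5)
We need to show $U(R)+\Delta(R)=U(R)$, which amounts to proving two inclusions; only the first uses anything beyond $0\in\Delta(R)$.

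For the inclusion $U(R)+\Delta(R)\subseteq U(R)$, take $u\in U(R)$ and $d\in\Delta(R)$. The definition of $\Delta(R)$ recalled in the introduction says exactly that $d+v\in U(R)$ for every $v\in U(R)$. Taking $v=u$ gives $u+d\in U(R)$ directly.

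Alternatively, one can argue through Proposition~\ref{6p2.1}. Part $(ii)$ gives $u^{-1}d\in\Delta(R)$. Part $(i)$, applied with the unit $1$, then shows that $1+u^{-1}d\in U(R)$. Hence $u+d=u(1+u^{-1}d)$ is a product of two units, and so it is a unit.

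For the reverse inclusion $U(R)\subseteq U(R)+\Delta(R)$, observe that $0\in\Delta(R)$. This holds because $\Delta(R)$ is a subring by Proposition~\ref{6p2.1}$(iii)$, or simply because $0+u=u$ is a unit for every $u\in U(R)$. Therefore each $u\in U(R)$ can be written as $u=u+0$, which lies in $U(R)+\Delta(R)$.

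Combining the two inclusions gives the equality. There is no real obstacle here: the first inclusion is essentially the definition of $\Delta(R)$. The only point to check is that the set sum $U(R)+\Delta(R)$ is interpreted elementwise, and that $\Delta(R)$ is nonempty and contains $0$, so that the sum contains all of $U(R)$.
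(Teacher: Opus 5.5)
Your proof is correct: the inclusion $U(R)+\Delta(R)\subseteq U(R)$ is exactly the defining property of $\Delta(R)$ recalled in the introduction, and the reverse inclusion follows from $0\in\Delta(R)$. The paper gives no proof of its own and simply cites \cite[Lemma 2.2]{d1}; your direct verification, and equally your alternative route through Proposition~\ref{6p2.1}, is what that citation amounts to.
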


\begin{Proposition} \label{dp2.5}
Every $m$-$\Delta$-clean ring is a clean ring.
\end{Proposition}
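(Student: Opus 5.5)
The plan is to turn an $m$-$\Delta$-clean decomposition $r=f+d$ into a clean decomposition. First I write the $m$-potent $f$ as an idempotent plus a unit. Then I absorb $d$ into that unit using Lemma~\ref{ls2.5}.

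Let $r\in R$ and write $r=f+d$ with $f^m=f$ and $d\in\Delta(R)$. Put $e=f^{m-1}$. Then
\[
e^2=f^{2m-2}=f^{m}f^{m-2}=f^{m-1}=e,
\]
so $e$ is an idempotent. Here and below, for $m=2$, $f^{m-2}$ is read as $e$ when it multiplies $e$, so that all formulas make sense. Moreover $e$ commutes with $f$, and $ef=fe=f^m=f$. The key claim is that
\[
u:=f-(1-e)
\]
is a unit of $R$. I expect this to be the only step requiring any verification.

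Intuitively, $f$ is invertible in the corner $eRe$, with inverse $f^{m-2}e$ (equal to $e$ when $m=2$). Meanwhile $-(1-e)$ is invertible in $(1-e)R(1-e)$. Since $f$ lies in $eRe$, the element $u$ is diagonal with respect to the Peirce decomposition for $e$. So I propose the candidate inverse
\[
v=ef^{m-2}e-(1-e),
\]
which equals $e-(1-e)=2e-1$ when $m=2$. I then check $uv=vu=1$ directly. The cross terms vanish because $f(1-e)=(1-e)f=0$. The diagonal terms give $f\cdot ef^{m-2}e=f^{m-1}=e$ and $(1-e)^2=1-e$, and these sum to $1$. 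The same computation works on the other side, since everything commutes with $f$ and $e$.

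With $u\in U(R)$ in hand, $f=(1-e)+u$, and hence
\[
r=(1-e)+(u+d).
\]
By Lemma~\ref{ls2.5}, $u+d\in U(R)+\Delta(R)=U(R)$. Since $1-e$ is an idempotent, $r$ is clean. As $r$ was arbitrary, $R$ is a clean ring. The main (mild) obstacle is the unit verification for $u$, including the degenerate case $m=2$. There $f=e$ and $u=2e-1$, which is its own inverse.
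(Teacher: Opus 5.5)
Your proof is correct and is the paper's argument: the same idempotent $1-f^{m-1}$, the same unit $f+f^{m-1}-1$, and the same appeal to Lemma~\ref{ls2.5} to absorb $d$. Your explicit inverse $ef^{m-2}e-(1-e)$, which equals $f^{m-2}+f^{m-1}-1$ for $m\ge 3$ and $2f-1$ for $m=2$, is the right one. The factor $f^{m-2}-f^{m-1}+1$ written in the paper's proof gives the product $2f^{m-1}-1$ for $m\ge 3$ (for example $-1$ when $f=0$), so your verification, including the $m=2$ case, fixes a sign slip there.
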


\begin{proof}
Let $r \in R$. Since $R$ is a $m$-$\Delta$-clean ring, we have $r=f+d$, where $f^m = f$ and $d \in \Delta(R)$. Then $r=(1 - f^{m-1})+\big(f+f^{m-1}-1+d\big)$. Since $(1-f^{m-1})^2=1-f^{m-1}$, it follows that the element $1 - f^{m-1}$ is idempotent. Furthermore, $(f+f^{m-1}-1)(f^{m-2}-f^{m-1}+1)=1$ implies that $f+f^{m-1}-1 \in U(R)$. Since $d \in \Delta(R)$, it follows from Lemma \ref{ls2.5} that $f + f^{m-1} - 1 + d \in U(R) + \Delta(R) = U(R)$. Therefore, $r = e + u$, where $e = 1 - f^{m-1}$ is an idempotent and $u = f + f^{m-1} - 1 + d$ is a unit. Thus every element of $R$ is clean and hence $R$ is a clean ring.
\end{proof}

Now we can easily prove the following result : 
\begin{Proposition}
Let $R = \displaystyle\prod_{i \in I} R_i$ be a ring. Then $R$ is (strongly) $m$-$\Delta$-clean if and only if each $R_i$ is (strongly) $m$-$\Delta$-clean.
\end{Proposition}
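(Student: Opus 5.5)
The proof works componentwise, using Proposition \ref{6p2.1}$(v)$, which gives $\Delta(R)=\prod_{i\in I}\Delta(R_i)$. Operations in $R=\prod_{i\in I} R_i$ are coordinatewise, so we need two elementary facts.

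First, an element $f=(f_i)_{i\in I}$ satisfies $f^m=f$ if and only if $f_i^m=f_i$ for every $i$.

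Second, $fd=df$ if and only if $f_id_i=d_if_i$ for every $i$.

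Together with Proposition \ref{6p2.1}$(v)$, this means a decomposition $r=f+d$ in $R$ of the required type (with or without the commuting condition) is exactly a family of decompositions $r_i=f_i+d_i$ of the same type in each $R_i$.

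For the ``if'' direction, let $r=(r_i)\in R$. For each $i$, choose a (strongly) $m$-$\Delta$-clean decomposition $r_i=f_i+d_i$ in $R_i$. This uses the axiom of choice when $I$ is infinite.

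Put $f=(f_i)$ and $d=(d_i)$. Then $f^m=f$, and $d\in\prod_{i}\Delta(R_i)=\Delta(R)$. In the strong case, $fd=df$ as well. Hence $r=f+d$ is (strongly) $m$-$\Delta$-clean.

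For the ``only if'' direction, fix $j\in I$ and $a\in R_j$. Let $r\in R$ have $j$-th coordinate $a$ and $0$ elsewhere; any other filler would also do. Write $r=f+d$ as a (strongly) $m$-$\Delta$-clean decomposition in $R$.

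Project onto the $j$-th coordinate with the canonical surjective ring homomorphism $\pi_j$. This gives $a=\pi_j(f)+\pi_j(d)$, with $\pi_j(f)^m=\pi_j(f)$. By Proposition \ref{6p2.1}$(v)$, $\pi_j(d)\in\Delta(R_j)$. In the strong case, $\pi_j(f)$ and $\pi_j(d)$ commute. So $R_j$ is (strongly) $m$-$\Delta$-clean.

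The only nontrivial ingredient is the identification $\Delta(\prod R_i)=\prod\Delta(R_i)$, which we take from Proposition \ref{6p2.1}$(v)$. It rests on $U(\prod R_i)=\prod U(R_i)$, and everything else is routine coordinatewise bookkeeping.
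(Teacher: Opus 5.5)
Your proposal is correct and follows the intended argument. The paper gives no written proof (it says the result "can easily" be proved), and the natural proof is exactly your coordinatewise one: combine Proposition~\ref{6p2.1}$(v)$, $\Delta(\prod R_i)=\prod\Delta(R_i)$, with the fact that being $m$-potent and commuting are both checked coordinate by coordinate.
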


\begin{Theorem}\cite[Proposition 1.6.]{l1}
Let $I$ be an ideal of $R$ such that $I \subseteq J(R)$. Then $\Delta(R/I) = \Delta(R)/I$.  
\end{Theorem}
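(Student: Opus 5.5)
The plan is to verify the two inclusions separately, writing $\bar{x}=x+I$. Two facts about the quotient map are used throughout. Since $I\subseteq J(R)$, an element $x\in R$ is a unit of $R$ if and only if $\bar{x}$ is a unit of $R/I$. The forward direction is trivial. For the converse, if $xy-1,\ yx-1\in I\subseteq J(R)$, then $xy$ and $yx$ are units, so $x$ has both a right and a left inverse. Consequently $U(R/I)=\{\bar{u} : u\in U(R)\}$, and units of $R/I$ lift to units of $R$. The second fact is that $I\subseteq J(R)\subseteq\Delta(R)$, so $\Delta(R)/I$ makes sense as a subset of $R/I$; since $\Delta(R)$ is a subring, it is closed under adding elements of $I$.

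\emph{Inclusion $\Delta(R)/I\subseteq\Delta(R/I)$.} Take $r\in\Delta(R)$ and an arbitrary unit $\bar{v}$ of $R/I$. By the lifting fact, $\bar{v}=\bar{u}$ for some $u\in U(R)$. By Proposition \ref{6p2.1}(i), $ru+1\in U(R)$, so its image $\bar{r}\bar{u}+\bar{1}$ is a unit of $R/I$. Therefore $\bar{r}\in\Delta(R/I)$.

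\emph{Inclusion $\Delta(R/I)\subseteq\Delta(R)/I$.} Suppose $\bar{r}\in\Delta(R/I)$. Every representative $r'=r+i$ with $i\in I$ satisfies the following: for each $u\in U(R)$, $\bar{u}$ is a unit of $R/I$, so $\overline{r'u+1}=\bar{r}\bar{u}+\bar{1}$ is a unit of $R/I$. By the first fact, $r'u+1\in U(R)$. Hence $r'\in\Delta(R)$ by Proposition \ref{6p2.1}(i), and so $\bar{r}=\bar{r'}\in\Delta(R)/I$.

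The only step that carries real content is the unit-lifting fact modulo an ideal inside $J(R)$: it is used in one direction for $\supseteq$ and in the other for $\subseteq$. Its proof is the standard argument that $1+J(R)\subseteq U(R)$, and it must give two-sided invertibility, which is why both $xy$ and $yx$ are shown to be units. The rest is bookkeeping, including the observation that membership of a coset in $\Delta(R)/I$ does not depend on the chosen representative, because $I\subseteq\Delta(R)$ and $\Delta(R)$ is closed under addition.
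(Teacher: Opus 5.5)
Your proof is correct. Two facts do all the work: an element $x$ is a unit of $R$ exactly when $x+I$ is a unit of $R/I$, because $I\subseteq J(R)$ makes both $xy$ and $yx$ units; and Proposition \ref{6p2.1}(i) characterizes $\Delta$ on both sides. The paper does not prove this statement itself but quotes it from Leroy and Matczuk, and your argument is the standard proof of that result.
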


\begin{Proposition} \label{pd2.8}
Let $R$ be a (strongly) $m$-$\Delta$-clean ring such that $I \subseteq J(R)$. Then $R/I$ is a (strongly) $m$-$\Delta$-clean ring.    
\end{Proposition}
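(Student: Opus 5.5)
The plan is to push a decomposition in $R$ down to $R/I$ through the canonical surjection $\pi: R \to R/I$, $r \mapsto \bar r = r+I$, and then recognise the image of the $\Delta(R)$-part by the theorem quoted just before the statement (\cite[Proposition 1.6]{l1}). Here $I$ is understood to be an ideal of $R$ with $I \subseteq J(R)$, since that is exactly the hypothesis of the quoted theorem.

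Fix an arbitrary element $\bar r \in R/I$ and lift it to some $r \in R$. Because $R$ is (strongly) $m$-$\Delta$-clean, write $r = f + d$ with $f^m = f$ and $d \in \Delta(R)$; in the strong case we also have $fd = df$. Applying the ring homomorphism $\pi$ gives
\[
\bar r = \bar f + \bar d .
\]
Since $\pi$ respects multiplication, $\bar f^{\,m} = \overline{f^m} = \bar f$, so $\bar f$ is $m$-potent in $R/I$. In the strong case the same reasoning gives $\bar f\bar d = \overline{fd} = \overline{df} = \bar d\bar f$.

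It remains to see that $\bar d \in \Delta(R/I)$. This is the only point that uses $I \subseteq J(R)$. By \cite[Proposition 1.6]{l1} we have $\Delta(R/I) = \Delta(R)/I$, and $\bar d = d + I$ with $d \in \Delta(R)$ lies in $\Delta(R)/I$. Hence $\bar d \in \Delta(R/I)$, so $\bar r$ is (strongly) $m$-$\Delta$-clean. Since $\bar r$ was arbitrary, $R/I$ is a (strongly) $m$-$\Delta$-clean ring.

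None of these steps is a real obstacle. The one point to watch is the meaning of $\Delta(R)/I$, because $\Delta(R)$ need not be an ideal and need not even contain $I$ a priori. Here this is harmless: $I \subseteq J(R) \subseteq \Delta(R)$, so $\Delta(R)/I$ makes sense as the image $\pi(\Delta(R))$. If one wanted to avoid the cited theorem, the inclusion actually needed, $\pi(\Delta(R)) \subseteq \Delta(R/I)$, can be checked directly. Units of $R/I$ lift to units of $R$ because $I \subseteq J(R)$. So for $\bar u \in U(R/I)$, lift it to $u \in U(R)$; then $d + u \in U(R)$, and therefore $\bar d + \bar u \in U(R/I)$.
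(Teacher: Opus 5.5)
Your proof is correct and follows the paper's argument: you reduce a (strong) decomposition $r=f+d$ modulo $I$ and invoke $\Delta(R/I)=\Delta(R)/I$ for $I\subseteq J(R)$. Your side remark is also valid: only the inclusion $\pi(\Delta(R))\subseteq\Delta(R/I)$ is needed, and it follows directly because units of $R/I$ lift to units of $R$ when $I\subseteq J(R)$.
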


\begin{proof}
Let $x \in R$. Then $x=f+d$ for some $m$-potent $f$ and $d \in \Delta(R)$. Therefore, $x+I=(f+I)+(d+I)$. Since $f$ is $m$-potent in $R$, it follows that $f+I$ is also an $m$-potent in $R/I$ and $d \in \Delta(R)$ implies that $d+I \in \Delta(R/I)=\Delta(R)/I$. Thus $x+I$ is $\Delta$-clean in $R/I$. Consequently, $R/I$ is a $m$-$\Delta$-clean ring. 

The strongly $m$-$\Delta$-clean case follows
similarly.
\end{proof}

From \cite{k1}, we recall that a ring $R$ is called a $\Delta U$-ring if $1+\Delta(R)=U(R)$. For further details on $\Delta U$-rings, we refer to \cite{k1}.

\begin{Proposition}\label{p2.9}
Let $R$ be a $\Delta U$-ring and $f^m = f$, $a \in \Delta(R)$. Then

$(i)$ $f - f^{m-1} + a \in \Delta(R)$,

$(ii)$ $f + f^{m-1} + a \in \Delta(R)$,

$(iii)$ $2f + a \in \Delta(R)$.
\end{Proposition}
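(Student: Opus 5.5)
The plan rests on one reformulation: in a $\Delta U$-ring, $x \in \Delta(R)$ if and only if $1+x \in U(R)$. Indeed, $1+\Delta(R)=U(R)$ gives the forward direction, and if $1+x=u\in U(R)$ then $u=1+d$ for some $d\in\Delta(R)$, so $x=d\in\Delta(R)$. Two by-products will be used. First, taking $u=-1$ gives $-1=1+d$ with $d=-2$, so $2\in\Delta(R)$. Second, for any idempotent $e$ the element $2e-1$ is a unit, since $(2e-1)^2=1$; hence $2e-1-1\in\Delta(R)$, and therefore $2e=(2e-2)+2\in\Delta(R)$ because $\Delta(R)$ is a subring (Proposition \ref{6p2.1}(iii)).

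For (i), put $e=f^{m-1}$. Then $e$ is idempotent and $fe=ef=f$, and $f\cdot f^{m-2}=e$ holds inside $eRe$, where for $m=2$ we read $f^{0}$ as $e$. I will check directly that $1+f-e=(1-e)+f$ is a unit with inverse $(1-e)+f^{m-2}e$. The cross terms vanish because $f(1-e)=0$. By Lemma \ref{ls2.5}, $(1+f-e)+a\in U(R)+\Delta(R)=U(R)$. The reformulation then gives $f-f^{m-1}+a\in\Delta(R)$.

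For (ii), I write $f+f^{m-1}+a=(f-f^{m-1}+a)+2f^{m-1}$. The first summand lies in $\Delta(R)$ by (i), and the second lies in $\Delta(R)$ by the remark above, since $f^{m-1}$ is idempotent. Closure of $\Delta(R)$ under addition finishes this part. For (iii), I apply (i) and (ii) with $a=0$, which puts $f-f^{m-1}$ and $f+f^{m-1}$ in $\Delta(R)$. Their sum $2f$ then lies in $\Delta(R)$, and so does $2f+a$.

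The main obstacle is (ii). A direct unit check of $1+f+f^{m-1}$ seems to need $3$ to be invertible. The way around it is the observation that $2\in\Delta(R)$ in a $\Delta U$-ring, which together with $2e-1\in U(R)$ puts $2e$ in $\Delta(R)$ for every idempotent $e$.
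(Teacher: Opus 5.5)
Your proof is correct and follows the paper's route: (i) comes from $1+f-f^{m-1}\in U(R)$ together with the $\Delta U$ condition, (ii) uses the same decomposition $(f-f^{m-1})+2f^{m-1}$ with $2f^{m-1}\in\Delta(R)$ obtained from $(2f^{m-1}-1)^2=1$, and (iii) adds (i) and (ii). The differences are cosmetic. The paper reads off $-2f^{m-1}\in\Delta(R)$ directly from the unit $1-2f^{m-1}$ rather than detouring through $2\in\Delta(R)$, and you additionally give an explicit inverse for $1+f-f^{m-1}$ that remains valid when $m=2$.
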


\begin{proof}
Since $R$ is a $\Delta U$-ring, so $U(R) = 1 + \Delta(R)$.

$(i)$ Let $u = f-f^{m-1}+1$. Then $u$ is a unit in $R$. Thus there exists $d \in \Delta(R)$ such that $u=1+d$. Hence $f - f^{m-1} = d \in \Delta(R)$. Since  $\Delta(R)$ is a subring of $R$ and $a\in \Delta(R)$, it follows that $f-f^{m-1}+a \in \Delta(R)$.

$(ii)$ Since $f^m=f$, the element $f^{m-1}$ is idempotent. Consequently, $(2f^{m-1}-1)^2=1$ and $1-2f^{m-1} \in U(R)$. Therefore, there exists $d \in \Delta(R)$ such that $1-2f^{m-1}=1+d$, which yields $-2f^{m-1}\in \Delta(R)$. Since $\Delta(R)$ is a subring of $R$, $2f^{m-1} \in \Delta(R)$. Together with part $(i)$, we obtain
\[
f + f^{m-1} = (f - f^{m-1}) + 2f^{m-1} \in \Delta(R).
\] 
Hence $f + f^{m-1} + a \in \Delta(R)$.

$(iii)$ From $(i)$ and $(ii)$, it follows that
\[ 2f = (f + f^{m-1}) + (f - f^{m-1}) \in \Delta(R).\]
Since $a \in \Delta(R)$, we find that $2f + a \in \Delta(R)$. This completes the proof.
\end{proof}

\begin{Proposition} \label{prop2.10}
Let $R$ be a ring. If $f \in R$ be an $m$-potent and $d \in \Delta(R)$, then the following hold :

$(i)$ $(f \pm f^{m-1})d$, $d(f \pm f^{m-1}) \in \Delta(R)$

$(ii)$ For every positive integer $k$, $2f^{k}d \in \Delta(R)$. In particular, if $2$ is invertible in $R$, then $\Delta(R)$ is closed under multiplication by $m$-potent elements.
\end{Proposition}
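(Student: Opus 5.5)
The plan is to write $f\pm f^{m-1}$ as a unit plus or minus $1$, and then use Proposition \ref{6p2.1}$(ii)$ (closure of $\Delta(R)$ under multiplication by units) together with Proposition \ref{6p2.1}$(iii)$ ($\Delta(R)$ is a subring). Put $e=f^{m-1}$. Since $f^m=f$, $e$ is idempotent and $ef=fe=f$, so $f\in eRe$. Moreover $f\cdot f^{m-2}=f^{m-2}\cdot f=e$ for $m\geq 3$, and for $m=2$ we simply have $f=e$. Hence $f$ is a unit of the corner ring $eRe$.

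First I will check that
\[
u=(1-e)+f=1+f-f^{m-1},\qquad v=f-(1-e)=f+f^{m-1}-1
\]
are units of $R$. Because $(1-e)$ and $f$ live in orthogonal corners, their inverses are $(1-e)+f^{m-2}$ and $f^{m-2}-(1-e)$ respectively (for $m=2$, read $f^{m-2}$ as $e$). The statement for $v$ is exactly the computation already used in Proposition \ref{dp2.5}. This yields
\[
f-f^{m-1}=u-1 \quad\text{and}\quad f+f^{m-1}=v+1 .
\]

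For $(i)$, take $d\in\Delta(R)$. Then
\[
(f-f^{m-1})d=ud-d \quad\text{and}\quad (f+f^{m-1})d=vd+d .
\]
By Proposition \ref{6p2.1}$(ii)$, both $ud$ and $vd$ lie in $\Delta(R)$. Since $\Delta(R)$ is a subring by Proposition \ref{6p2.1}$(iii)$, it is closed under sums and differences, so both products lie in $\Delta(R)$. The right-hand versions $d(f\pm f^{m-1})=du\mp d$ and $dv\pm d$ follow in the same way.

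For $(ii)$, I add the two elements from $(i)$:
\[
2fd=(f+f^{m-1})d+(f-f^{m-1})d\in\Delta(R).
\]
For a general positive integer $k$, note that $f^k$ is again $m$-potent, since $(f^k)^m=(f^m)^k=f^k$. Applying the same argument with $f^k$ in place of $f$ gives $2f^kd\in\Delta(R)$, and symmetrically $d\,2f^k\in\Delta(R)$. Finally, if $2\in U(R)$, then $2^{-1}$ is a central unit, so Proposition \ref{6p2.1}$(ii)$ gives
\[
fd=2^{-1}(2fd)\in\Delta(R),
\]
and likewise $df\in\Delta(R)$. Thus $\Delta(R)$ is closed under multiplication by $m$-potents.

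The only step needing care is the unit verification, including the degenerate case $m=2$; everything else is formal.
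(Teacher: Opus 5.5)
Your proof is correct and follows the paper's argument essentially step for step. The paper also shows that $f-f^{m-1}+1$ and $f+f^{m-1}-1$ are units with inverses built from $f^{m-2}$, uses closure of $\Delta(R)$ under multiplication by units together with the subring property for $(i)$, adds the two products to get $2fd$, passes to the $m$-potent $f^k$, and multiplies by $2^{-1}$. Your explicit handling of $m=2$ (reading $f^{m-2}$ as $f^{m-1}$) is a small improvement, since the paper's inverse formulas are only literally valid for $m\ge 3$.
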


\begin{proof}
$(i)$ Since $f$ is an $m$-potent element of $R$, we find that 
\[(1-f^{m-1}-f)(1-f^{m-1}-f^{m-2})=1=(1-f^{m-1}+f)(1-f^{m-1}+f^{m-2})\]

This implies that $f+f^{m-1}-1$ and $f-f^{m-1}+1$ are units in $R$. By Proposition \ref{6p2.1} $(ii)$, $\Delta(R)$ is closed under multiplication by units, it follows that $(f + f^{m-1} - 1)d \in \Delta(R)$ and $(f - f^{m-1} + 1)d \in \Delta(R)$.
Since $\Delta(R)$ is a subring of $R$ and $d \in \Delta(R)$, it follows that $(f \pm f^{m-1})d \in \Delta(R)$. 

Similarly, we can prove that $d(f\pm f^{m-1}) \in \Delta(R)$.

$(ii)$ From part $(i)$, we have $(f+f^{m-1})d \in \Delta(R)$ and $(f-f^{m-1})d \in \Delta(R)$. So we find that
$2fd=(f+f^{m-1})d+(f-f^{m-1})d \in \Delta(R)$.

Now let $k$ be any positive integer. Then $f^{k}$ is again an $m$-potent element. Applying the above argument to $f^{k}$, we conclude that
$2f^{k}d \in \Delta(R)$.

Finally, if $2 \in U(R)$, then $f^{k}d=2^{-1}(2f^{k}d) \in \Delta(R)$. In particular, taking $k=1$, we obtain $fd \in \Delta(R)$. Therefore, $\Delta(R)$ is closed under multiplication by $m$-potent elements.
\end{proof}

As an immediate consequence of the above {Proposition \ref{prop2.10}}, we can prove \cite[Proposition 2.6]{d1} by taking $m=2$ which is as follows :

\begin{Corollary}\cite[Proposition 2.6]{d1}
For every $e \in \mathrm{Id}(R)$ and $d \in \Delta(R)$, the containment $2ed \in \Delta(R)$ holds.
\end{Corollary}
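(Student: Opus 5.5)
This is the case $m=2$ of Proposition \ref{prop2.10}$(ii)$. An idempotent $e$ satisfies $e^2=e$, so it is a $2$-potent element. Applying Proposition \ref{prop2.10}$(ii)$ with $f=e$, $m=2$ and $k=1$ gives $2ed\in\Delta(R)$ directly.

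To keep the argument self-contained, I will also unwind what the specialization says. With $m=2$ we have $f^{m-1}=e$. The two units used in part $(i)$ become
\[(1-2e)^2=1 \quad\text{and}\quad 1-e+e=1.\]
Hence $2e-1\in U(R)$.

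By Proposition \ref{6p2.1}$(ii)$, $\Delta(R)$ is closed under multiplication by units, so $(2e-1)d\in\Delta(R)$. By Proposition \ref{6p2.1}$(iii)$, $\Delta(R)$ is a subring, so adding $d\in\Delta(R)$ gives
\[2ed=(2e-1)d+d\in\Delta(R).\]

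There is essentially no obstacle. The only point needing care is that the degenerate second unit $1$ contributes nothing when $m=2$. For that reason the direct unit $2e-1$ suffices, rather than summing the two expressions $(f\pm f^{m-1})d$ as in the general proof.
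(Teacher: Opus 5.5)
Your proof is correct and follows the paper's route exactly: the paper obtains this corollary by setting $m=2$ in Proposition \ref{prop2.10}$(ii)$, and your unwound argument $2ed=(2e-1)d+d$ is what that proof reduces to when $m=2$. Your direct check $(1-2e)^2=1$ is actually the cleaner justification at $m=2$. The paper's displayed inverse formula uses $f^{m-2}$, which for $m=2$ becomes $f^{0}=1$, and then the product is $(1-2e)(-e)=e$ rather than $1$.
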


\begin{Proposition} \label{prop2.9}
Let $R$ be a strongly $m$-$\Delta$-clean ring. If $a^2 \in \Delta(R)$, then $a \in \Delta(R)$.
\end{Proposition}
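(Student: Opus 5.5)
The plan is to decompose $a$ using the strong $m$-$\Delta$-clean hypothesis and then show that the $m$-potent part must be zero. Write $a=f+d$ with $f^m=f$, $d\in\Delta(R)$ and $fd=df$. Because $f$ and $d$ commute, squaring gives
\[
a^2=f^2+2fd+d^2 .
\]
By Proposition \ref{prop2.10}$(ii)$ with $k=1$, we have $2fd\in\Delta(R)$. Also $d^2\in\Delta(R)$, since $\Delta(R)$ is a subring by Proposition \ref{6p2.1}$(iii)$. By hypothesis $a^2\in\Delta(R)$. Closure of the subring $\Delta(R)$ under subtraction then yields
\[
f^2=a^2-2fd-d^2\in\Delta(R).
\]
This is the only place where commutativity of $f$ and $d$ is used; it is what makes the cross term exactly $2fd$, to which Proposition \ref{prop2.10} applies.

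Next, pass from $f^2$ to the idempotent $e=f^{m-1}$. Since $f^m=f$, we get $e^2=f^{2m-2}=f^{m}f^{m-2}=f^{m-1}=e$. Moreover,
\[
e=e^2=f^{2(m-1)}=(f^2)^{m-1},
\]
which lies in $\Delta(R)$ because $\Delta(R)$ is multiplicatively closed and $f^2\in\Delta(R)$. This choice avoids any parity case split on $m$.

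Now show that an idempotent in $\Delta(R)$ is zero. From the description $\Delta(R)=\{x: 1-xu\in U(R)\ \forall u\in U(R)\}$ with $u=1$, the element $1-e$ is a unit. But $1-e$ is also idempotent, and an idempotent unit equals $1$ (since $(1-e)^2=1-e$, multiply by $(1-e)^{-1}$). Hence $e=0$, so $f=f^m=f\cdot f^{m-1}=fe=0$. Therefore $a=d\in\Delta(R)$.

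I expect no serious obstacle. The only points requiring care are the justification that $2fd\in\Delta(R)$, which is exactly Proposition \ref{prop2.10}$(ii)$, and the observation that $\Delta(R)$ contains no nonzero idempotents.
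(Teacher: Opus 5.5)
Your proof is correct, and it differs from the paper's only in the final step.

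\textbf{Shared part.} Your first half is exactly the paper's argument: decompose $a=f+d$, expand $a^2=f^2+2fd+d^2$ using $fd=df$, and invoke Proposition~\ref{prop2.10}$(ii)$ to get $f^2\in\Delta(R)$.

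\textbf{Paper's finish.} The paper writes $f^2=f(f-f^{m-1}+1)$, where $f-f^{m-1}+1$ is a unit. This gives $f=f^2(f-f^{m-1}+1)^{-1}\in\Delta(R)$ by closure of $\Delta(R)$ under multiplication by units (Proposition~\ref{6p2.1}$(ii)$). It then concludes $a=f+d\in\Delta(R)$.

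\textbf{Your finish.} You observe that the idempotent $f^{m-1}=(f^2)^{m-1}$ already lies in $\Delta(R)$, since $\Delta(R)$ is multiplicatively closed. You then note that $\Delta(R)$ contains no nonzero idempotents, because $1-e$ would be an idempotent unit. This forces $f^{m-1}=0$, hence $f=0$.

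\textbf{What each buys.} Your route needs only the subring property and the definition of $\Delta(R)$ with $u=1$; it never has to exhibit the unit $f-f^{m-1}+1$. It also yields a sharper conclusion: in any strongly $m$-$\Delta$-clean decomposition of an $a$ with $a^2\in\Delta(R)$, the $m$-potent part is $0$, so $a=d$. The paper's route stops at $f\in\Delta(R)$, which is all the statement requires.
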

\begin{proof}
Let $a \in R$. Since $R$ is a strongly $m$-$\Delta$-clean ring, there exist $f, d \in R$ such that $a = f + d$,
where $f^m = f$, $d \in \Delta(R)$ and $fd = df$. Then $a^2 = (f + d)^2 = f^2 + 2fd + d^2$.

By assumption, $a^2 \in \Delta(R)$. Moreover, since $d \in \Delta(R)$, we have $d^2 \in \Delta(R)$. Also, since $f$ is $m$-potent and $d \in \Delta(R)$, it follows from the Proposition \ref{prop2.10} $(ii)$ that $2fd \in \Delta(R)$. Therefore, $f^2=a^2-2fd-d^2 \in \Delta(R)$. Next, $f^2=f\big(f-f^{m-1}+1\big)$ and $f-f^{m-1}+1$ is a unit in $R$. Therefore, multiplying by the inverse of $f-f^{m-1}+1$, we have $f=f^2(f-f^{m-1}+1)^{-1} \in \Delta(R)$. Consequently, $a = f + d \in \Delta(R)$. This completes the proof. \end{proof}

\begin{Corollary}\label{cor2.11}
Let $R$ be a strongly $m$-$\Delta$-clean ring. If $a^{2^k} \in \Delta(R)$ for some positive integer $k$, then $a \in \Delta(R)$.
\end{Corollary}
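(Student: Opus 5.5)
The plan is a straightforward induction on $k$, using Proposition \ref{prop2.9} as the inductive engine. The key observation is that $a^{2^k} = \big(a^{2^{k-1}}\big)^2$. Since $R$ is strongly $m$-$\Delta$-clean, every element of $R$, and in particular every power of $a$, satisfies the hypotheses under which Proposition \ref{prop2.9} applies. So the proposition can be applied to the element $a^{2^{k-1}}$ itself, not just to $a$.

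The steps, in order, are as follows. For the base case $k=1$, the hypothesis is $a^2 \in \Delta(R)$, and Proposition \ref{prop2.9} gives $a \in \Delta(R)$ directly. For the inductive step, assume the statement holds for $k-1 \geq 1$, and suppose $a^{2^k} \in \Delta(R)$. Set $b = a^{2^{k-1}}$, so that $b^2 = a^{2^k} \in \Delta(R)$. Proposition \ref{prop2.9} applied to $b$ yields $b = a^{2^{k-1}} \in \Delta(R)$. The induction hypothesis then gives $a \in \Delta(R)$.

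Equivalently, one can phrase this as a descending chain. Starting from $a^{2^k}\in\Delta(R)$, we apply Proposition \ref{prop2.9} successively to $a^{2^{k-1}}, a^{2^{k-2}}, \dots, a$, and each application halves the exponent while preserving membership in $\Delta(R)$. There is essentially no obstacle here. The only point to check is that Proposition \ref{prop2.9} was proved for an arbitrary element of $R$, since its proof decomposes that element as $f+d$. This means it legitimately applies to $a^{2^{j}}$ and not only to the original $a$.
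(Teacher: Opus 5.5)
Your proof is correct and is essentially the paper's argument. The paper also writes $a^{2^k}=\big(a^{2^{k-1}}\big)^2$, applies Proposition~\ref{prop2.9} to $a^{2^{k-1}}$, and repeats; your induction on $k$ simply makes that repetition formal, and you correctly note that Proposition~\ref{prop2.9} applies to every element of $R$, including the powers $a^{2^{j}}$.
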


\begin{proof}
Suppose that $a^{2^k} \in \Delta(R)$ for some positive integer $k$. Since $a^{2^k}=\big(a^{2^{k-1}}\big)^2$, from Proposition \ref{prop2.9}, we find that $a^{2^{k-1}} \in \Delta(R)$. Repeating this argument successively yields that $a\in\Delta(R)$. \end{proof}

\begin{Corollary}
Let $R$ be a strongly $m$-$\Delta$-clean ring. Then every nilpotent element of $R$ belongs to $\Delta(R)$  and hence $\mathrm{Nil}(R) \subseteq \Delta(R)$.
\end{Corollary}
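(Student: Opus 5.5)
The plan is to reduce to Corollary \ref{cor2.11}, after first observing that $0 \in \Delta(R)$. Indeed, $\Delta(R)$ is a subring of $R$ by Proposition \ref{6p2.1} $(iii)$; directly, $0+u=u\in U(R)$ for every $u\in U(R)$.

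Now let $a\in \mathrm{Nil}(R)$, so $a^n=0$ for some positive integer $n$. Choose a positive integer $k$ with $2^k\ge n$; for instance $k=n$ works, since $2^n> n$. Then
\[
a^{2^k}=a^{n}a^{2^k-n}=0\in\Delta(R).
\]
Since $R$ is strongly $m$-$\Delta$-clean, Corollary \ref{cor2.11} gives $a\in\Delta(R)$. Because $a$ was an arbitrary nilpotent element, $\mathrm{Nil}(R)\subseteq\Delta(R)$.

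No step is really an obstacle. The substantive work, namely extracting square roots inside $\Delta(R)$ using the commuting $m$-potent decomposition together with Proposition \ref{prop2.10} $(ii)$, was already carried out in Proposition \ref{prop2.9}. The only detail to handle is choosing the exponent $2^k$ so that it is at least the nilpotency index $n$, which the choice $k=n$ settles.
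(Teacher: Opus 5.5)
Your proof is correct and is essentially the paper's own argument: choose $k$ with $2^k\ge n$, observe $a^{2^k}=0\in\Delta(R)$, and invoke Corollary~\ref{cor2.11}. Your explicit check that $0\in\Delta(R)$ and the concrete choice $k=n$ are small details the paper leaves implicit.
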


\begin{proof}
Let $a \in \mathrm{Nil}(R)$. Then there exists a positive integer $n$ such that $a^n=0$. Choose a positive integer $k$ such that $2^k \geq n$. Since $a^n=0$, we have $a^{2^k} = 0 \in \Delta(R)$. Therefore, from Corollary \ref{cor2.11}, it follows that $a \in \Delta(R)$. Hence $\mathrm{Nil}(R) \subseteq \Delta(R)$.
\end{proof}

\begin{Proposition} \label{dp2.15}
Let $R$ be a strongly $m$-$\Delta$-clean ring with $2 \in U(R)$. Then for any $a \in R$, $a-a^m \in \Delta(R)$.
\end{Proposition}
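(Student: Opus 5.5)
Write $a=f+d$ with $f^m=f$, $d\in\Delta(R)$ and $fd=df$, as given by the strongly $m$-$\Delta$-clean hypothesis. Since $f$ and $d$ commute, the binomial theorem applies:
\[
a^m=(f+d)^m=f^m+\sum_{k=1}^{m}\binom{m}{k}f^{m-k}d^{k}=f+\sum_{k=1}^{m}\binom{m}{k}f^{m-k}d^{k},
\]
where $f^0=1$. Subtracting from $a=f+d$ gives
\[
a-a^m = d-\sum_{k=1}^{m}\binom{m}{k}f^{m-k}d^{k}.
\]
So it suffices to show that each term $f^{m-k}d^k$ lies in $\Delta(R)$. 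Since $\Delta(R)$ is a subring by Proposition \ref{6p2.1}$(iii)$, integer multiples and sums of such terms then stay in $\Delta(R)$.

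For $k=m$ the term is $d^m$, which lies in $\Delta(R)$ because $\Delta(R)$ is closed under multiplication. For $1\le k\le m-1$, the exponent $m-k$ is a positive integer and $d^k\in\Delta(R)$. Proposition \ref{prop2.10}$(ii)$ gives $2f^{m-k}d^k\in\Delta(R)$. Because $2\in U(R)$, the same proposition yields $f^{m-k}d^k=2^{-1}\bigl(2f^{m-k}d^k\bigr)\in\Delta(R)$; this also follows from Proposition \ref{6p2.1}$(ii)$, since $2^{-1}$ is a unit. Hence every summand lies in $\Delta(R)$, and so does $d$. Therefore $a-a^m\in\Delta(R)$.

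The only point needing care is the use of Proposition \ref{prop2.10}$(ii)$ with $d$ replaced by $d^k$ and with exponent $m-k\ge 1$. This is exactly where the hypothesis $2\in U(R)$ enters: without it, $\Delta(R)$ need not absorb multiplication by $f^{m-k}$. The remaining steps are routine bookkeeping, and the commutativity $fd=df$ is what justifies the binomial expansion.
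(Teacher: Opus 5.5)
Your proof is correct and follows the paper's argument. You expand $(f+d)^m$ using $fd=df$, use Proposition~\ref{prop2.10}$(ii)$ together with $2\in U(R)$ to put each mixed term $f^{m-k}d^k$ in $\Delta(R)$, and conclude $a-a^m=d-d'\in\Delta(R)$. You also treat the pure term $d^m$ separately, which the paper's blanket claim about $f^id^j$ for positive $i,j$ glosses over; that is a small but welcome bit of extra care.
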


\begin{proof}
Let $a \in R$. Then $a=f+d$, where $f^m=f$, $d \in \Delta(R)$ and $fd=df$. Since $2 \in U(R)$, from Proposition~\ref{prop2.10}, we find that $f^i d^j \in \Delta(R)$ for all positive integers $i$ and $j$. Hence $a^m=(f+d)^m=f+d'$ for some $d' \in \Delta(R)$. Therefore, $a-a^m=d-d' \in \Delta(R)$.
\end{proof}

Recall that a ring $R$ is called reduced if it contains no nonzero nilpotent elements i.e. $Nil(R)=(0)$.

\begin{Lemma} \label{lem2.16}
 Let $R$ be a strongly $m$-$\Delta$-clean ring. Then $R/J(R)$ is reduced. \end{Lemma}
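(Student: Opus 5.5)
The plan is to pass to $\bar R=R/J(R)$ and show that $\bar R$ has no nonzero square-zero elements. This suffices, because if $x^n=0$ with $n\geq 2$ minimal and $x\neq 0$, then $y=x^{n-1}\neq 0$ satisfies $y^2=0$. By Proposition \ref{pd2.8} with $I=J(R)$, the ring $\bar R$ is again strongly $m$-$\Delta$-clean, and $\Delta(\bar R)=\Delta(R)/J(R)$. The preceding Corollary, applied to $\bar R$, then gives $\mathrm{Nil}(\bar R)\subseteq \Delta(\bar R)$. Since $J(\bar R)=0$, the goal becomes: if $a\in\bar R$ and $a^2=0$, then $a\in J(\bar R)$, that is, $1-ra\in U(\bar R)$ for every $r\in\bar R$.

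The key observation is that $\Delta(\bar R)$ absorbs the constant $1$: if $x\in\Delta(\bar R)$, then $x-1=x+(-1)\in U(\bar R)$, so $1-x\in U(\bar R)$. Hence it is enough to show that $ra\in\Delta(\bar R)$ for all $r$. To do this I would decompose $r$ using Proposition \ref{dp2.5}, writing $r=e+u$ with $e$ idempotent and $u$ a unit. By Proposition \ref{6p2.1}$(ii)$, $ua\in\Delta(\bar R)$ because $a\in\Delta(\bar R)$. Since $\Delta(\bar R)$ is a subring, it then remains to prove that $ea\in\Delta(\bar R)$ for every idempotent $e$ and every square-zero $a$.

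For this I would use the Peirce decomposition $ea=eae+ea(1-e)$. The term $ea(1-e)$ satisfies $(ea(1-e))^2=0$, so it lies in $\mathrm{Nil}(\bar R)\subseteq\Delta(\bar R)$. The remaining term is $eae$, and this is the main obstacle, since $eae$ need not be nilpotent. My first attempt would be to note that $(1-e)ae$ and $ea(1-e)$ are square-zero, hence in $\Delta(\bar R)$, and to use the unit $1-2e$ together with Proposition \ref{prop2.10} to move between $ea$, $ae$ and $eae$. If that fails, I would instead apply the strong decomposition to the element $e+a$ (or to $ea$) and use the commutation $fd=df$ to extract $eae$ as a combination of $\Delta$-elements and $m$-potents. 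Proposition \ref{prop2.9} would then let me pass from squares lying in $\Delta(\bar R)$ back to the elements themselves.
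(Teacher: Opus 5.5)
There is a genuine gap. Your reductions are sound:
\begin{itemize}
\item it suffices to show that $\bar R=R/J(R)$ has no nonzero square-zero elements;
\item $\bar R$ is strongly $m$-$\Delta$-clean, with $\mathrm{Nil}(\bar R)\subseteq\Delta(\bar R)$;
\item $x\in\Delta(\bar R)$ forces $1-x\in U(\bar R)$;
\item via the clean decomposition $r=e+u$, it is enough to show $ea\in\Delta(\bar R)$ for every idempotent $e$ and every $a$ with $a^2=0$.
\end{itemize}
But that last claim, which you flag as the main obstacle, is never proved, and the routes you sketch do not reach it. The unit $1-2e$ together with Proposition \ref{prop2.10} only gives $2ea\in\Delta(\bar R)$, and hence $2eae\in\Delta(\bar R)$. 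This is useless when $2\notin U(\bar R)$; in characteristic $2$ it just says $0\in\Delta(\bar R)$. The second route (decompose $e+a$ or $ea$ and extract $eae$) is not specified enough to check. For instance, writing $ea=f+d$ and applying Lemma \ref{lem2.17} gives $f=ef$ and $fa=0$, but not $f=0$. As written, the argument stops exactly at the step that carries the content.

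The missing step is short. It is the Proposition \ref{prop2.9} idea you mention at the end, applied to $ea$ itself rather than to $eae$. Since $a^2=0$,
\[
(ea)^2=ea\,e\,a=ea(e-1)a=-\,ea(1-e)\cdot a .
\]
Here $ea(1-e)$ is square-zero, so it lies in $\Delta(\bar R)$, and $a\in\Delta(\bar R)$. Hence $(ea)^2$ lies in the subring $\Delta(\bar R)$, and Proposition \ref{prop2.9} gives $ea\in\Delta(\bar R)$.

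With this line added, your proof is complete and takes a genuinely different route from the paper. The paper stays in $R$ and uses the identity $(1-rx)(1+xr)=(xr-rx)+(1-rx^2r)$, which reduces the problem to showing the commutator $xr-rx$ lies in $\Delta(R)$. After writing $r=f+d$ strongly, the diagonal Peirce term $f^{m-1}xf^{m-1}$ cancels in $xf^{m-1}-f^{m-1}x$. What remains are square-zero off-diagonal pieces plus terms handled by Proposition \ref{prop2.10}(i). So the paper never meets your $eae$ obstacle. Your version trades that commutator computation for cleanness (Proposition \ref{dp2.5}) plus the extra square trick above.
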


\begin{proof}
Suppose that $x^2 \in J(R)$. Since $J(R) \subseteq \Delta(R)$, from Proposition \ref{prop2.9}, it follows that $x \in \Delta(R)$.
We next show that $x \in J(R)$. Let $r \in R$. Then $(1-rx)(1+xr)=1+xr-rx-rx^2r=(xr-rx)+(1-rx^2r)$. Since $x^2 \in J(R)$, we have $1-rx^2r \in U(R)$. Therefore, it suffices to prove that $xr-rx \in \Delta(R)$. Since $R$ is a strongly $m$-$\Delta$-clean ring, so $r=f+d$, where $f^m=f$, $d \in \Delta(R)$ and $fd=df$. Then $xr-rx=x(f+d)-(f+d)x=(xf-fx)+(xd-dx)$. Since $x,d \in \Delta(R)$, we have $xd-dx \in \Delta(R)$. Thus it remains to show that $xf-fx \in \Delta(R)$.

Now $[f^{m-1}x(1-f^{m-1})]^2=0 \in \Delta(R)$, because $(1-f^{m-1})f^{m-1}=0$. From Proposition \ref{prop2.9}, we find that $f^{m-1}x-f^{m-1}xf^{m-1} \in \Delta(R)$. Similarly, $[(1-f^{m-1})xf^{m-1}]^2=0$ implies that $xf^{m-1}-f^{m-1}xf^{m-1} \in \Delta(R)$. Furthermore, $xf^{m-1}-f^{m-1}x=(xf^{m-1}-f^{m-1}xf^{m-1})-(f^{m-1}x-f^{m-1}xf^{m-1}) \in \Delta(R)$. Hence $xf-fx=f^{m-1}x-xf^{m-1}+x(f+f^{m-1})-(f+f^{m-1})x \in \Delta(R)$.
Consequently, $xr-rx \in \Delta(R)$. Therefore, $(1-rx)(1+xr)\in U(R)+\Delta(R)\subseteq U(R)$ by Lemma \ref{ls2.5}. Hence $1-rx$ is right invertible for every $r \in R$. This shows that $x \in J(R)$. 

Let $a+J(R)$ be a nilpotent element of $R/J(R)$. Then $a^k \in J(R)$ for some positive integer $k$. By the preceding observation if $x^2 \in J(R)$, we have $x \in J(R)$. Therefore $a^k \in J(R)$ implies $a \in J(R)$. Hence, $a+J(R)=J(R)$, the zero element of $R/J(R)$. Thus $R/J(R)$ is reduced.
\end{proof}

Let $R$ be a ring and $a \in R$. We denote 
\[
ann_l(a)=\{ r \in R : ra=0 \} \quad \text{and} \quad ann_r(a)=\{ r \in R : ar=0 \}.
\]

\begin{Lemma}\label{lem2.17}
Let $R$ be a ring and $a\in R$. Suppose that $a=f+d$ is a strongly $m$-$\Delta$-clean decomposition of $a$, where $f$ is an $m$-potent element, $d\in\Delta(R)$ and $fd=df$. Then
\[
ann_l(a)\subseteq ann_l(f)
\quad\text{and}\quad
ann_r(a)\subseteq ann_r(f).
\]
\end{Lemma}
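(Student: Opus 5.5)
Suppose $ra=0$. Write $e=f^{m-1}$, an idempotent commuting with $f$ and $d$, with $fe=ef=f$. The plan is to show $rf=0$ by showing $re=0$, since then $rf=ref=0$. From $r(f+d)=0$ we get $rf=-rd$.

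First multiply on the right by $e$. Since $d$ commutes with $e$, this gives $rfe=-rde$, that is, $rf=-red$. Next multiply $ra=0$ on the right by $f^{m-2}$, interpreting $f^{0}=1$ when $m=2$. This gives $r(f^{m-1}+df^{m-2})=0$, so
\[ re = -r\,d f^{m-2}. \]
Multiplying instead by $e$ gives $r(f+de)=0$.

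The natural idea is to invert something. The element $f+(1-e)$ is a unit, with inverse $f^{m-2}e+(1-e)$, where $f^{m-2}e$ means $f^{m-1}=e$ when $m=2$. One could then try to write $a$ as a unit times something, but $d$ need not be invertible. The better route works inside the corner $eRe$: $ef$ is invertible in $eRe$ with inverse $f^{m-2}e$, and $d$ commutes with both. So
\[ ae = fe + de = f\bigl(e + f^{m-2}de\bigr). \]
The key claim is that $e+f^{m-2}de$ is invertible in $eRe$. The element $f^{m-2}d$ commutes with $e$, and $e f^{m-2} d e$ sits in the corner.

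To prove the claim, use the unit $v=f^{m-2}e+(1-e)$ of $R$. Then $1+vd\in U(R)$ because $d\in\Delta(R)$, by Proposition~\ref{6p2.1}(i). Since $vd$ commutes with $e$, the unit $1+vd$ decomposes as
\[ 1+vd = \bigl(e+f^{m-2}de\bigr) + \bigl((1-e)+(1-e)d(1-e)\bigr), \]
a block-diagonal element. A unit commuting with $e$ has inverse commuting with $e$, so each block is invertible in its corner. In particular $w=e+f^{m-2}de$ is invertible in $eRe$ with some inverse $w'$.

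Now $0=rae=rfw$. Multiplying on the right by $w'$ gives $rfe=0$, so $rf=0$. For the right annihilator the argument is symmetric: if $as=0$, then $eas=wfs=0$, using the commutation $ea=fw=wf$, and multiplying on the left by $w'$ gives $fs=0$.

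The main obstacle is the block-decomposition step: checking that the inverse of $1+vd$ commutes with $e$, and that $e+f^{m-2}de$ is exactly the $e$-corner piece, including the $m=2$ convention. The rest is routine bookkeeping with the commuting elements $f,e,d$.
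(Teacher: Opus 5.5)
Your proof is correct, and it takes a genuinely different route from the paper.

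The paper stays entirely in $R$. From $rf=-rd$ and $f=f^m$, it uses $fd=df$ to push the relation through the powers of $f$, obtaining $rf=rf^m=(-1)^{m-1}rfd^{m-1}$, that is, $rf\bigl(1+(-1)^m d^{m-1}\bigr)=0$. It then cancels the unit $1+(-1)^m d^{m-1}\in 1+\Delta(R)$. This is a three-line computation, but it relies on $d^{m-1}\in\Delta(R)$, i.e.\ on $\Delta(R)$ being closed under products.

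You instead pass to the corner $eRe$ with $e=f^{m-1}$ and factor $ea=ae=fw=wf$ with $w=e+f^{m-2}de$. You get $w\in U(eRe)$ by splitting the unit $1+vd$, where $v=f^{m-2}e+(1-e)$, into its $e$- and $(1-e)$-blocks. This uses only the defining property that $1+vd\in U(R)$ for a unit $v$.

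Your route also proves something stronger: $f$ and $ae$ are associates in $eRe$, since $f=aew'$ and $ae=fw$. Hence $fR=aeR$ and $Rf=Rea$, and both annihilator inclusions follow immediately. The price is the extra bookkeeping with the block decomposition and the $m=2$ convention.

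The preliminary identities $rf=-red$ and $re=-rdf^{m-2}$ in your first paragraph are never used and can be dropped.
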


\begin{proof}
We prove that $ann_l(a) \subseteq ann_l(f)$. The proof for the right annihilator is similar.

Let $r\in ann_l(a)$. Then $ra=0$ and hence $r(f+d)=0$. This implies that $rf=-rd$. Since $f^m=f$, multiplying the above equality on the right by $f^{m-1}$, we get $rf=-rdf^{m-1}$. Since $fd=df$, we have $rf=(-1)^t rd^t f^{\,m-t}$,
\ $ 1\le t\le m-1$. In particular, taking $t=m-1$, we have $rf=(-1)^{m-1}rd^{\,m-1}f$. Again, since $fd=df$, it follows that $rd^{\,m-1}f=rfd^{\,m-1}$. Therefore, $rf=(-1)^{m-1}rfd^{\,m-1}$. Hence $rf\bigl(1+(-1)^m d^{\,m-1}\bigr)=0$. Since $d\in\Delta(R)$, we have $(-1)^m d^{\,m-1}\in\Delta(R)$ and hence $1+(-1)^m d^{\,m-1}\in U(R)$. Therefore, $rf=0$. Thus $r \in ann_l(f)$ and consequently $ann_l(a) \subseteq ann_l(f)$. \end{proof}

\begin{Theorem}
Let $e$ be an idempotent in $R$. If $a \in eRe$ is strongly $m$-$\Delta$-clean in $R$, then $a$ is strongly $m$-$\Delta$-clean in $eRe$.
\end{Theorem}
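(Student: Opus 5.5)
The plan is to take a strongly $m$-$\Delta$-clean decomposition $a=f+d$ in $R$, with $f^m=f$, $d\in\Delta(R)$ and $fd=df$. We then show that this same decomposition already lives in $eRe$ and is a strongly $m$-$\Delta$-clean decomposition there. Three things need checking: $f\in eRe$; $d\in eRe$; and $d\in\Delta(eRe)$. The identities $f^m=f$ and $fd=df$ then hold automatically inside $eRe$.

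\emph{Step 1: $f\in eRe$.} Since $a\in eRe$, we have $(1-e)a=0$ and $a(1-e)=0$. Hence $1-e\in ann_l(a)\cap ann_r(a)$. By Lemma \ref{lem2.17}, $1-e\in ann_l(f)\cap ann_r(f)$, so $(1-e)f=0=f(1-e)$. This gives $f=ef=fe$, so $f=efe\in eRe$. Consequently $d=a-f\in eRe$ as well. This step is immediate from the annihilator lemma, which was clearly set up for exactly this purpose.

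\emph{Step 2: $d\in\Delta(eRe)$.} This is the main point. Let $u\in U(eRe)$ with inverse $u'$ in $eRe$. Then $u+(1-e)\in U(R)$, with inverse $u'+(1-e)$. Since $d\in\Delta(R)$, the element $x:=d+u+(1-e)$ is a unit of $R$. Because $d,u\in eRe$, the element $x$ commutes with $e$, and we have $ex=xe=d+u$.

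Let $v=x^{-1}$. Since $x$ commutes with $e$, so does $v$: from $ex=xe$ we get $ve=ev$ by multiplying by $v$ on both sides. Then
\[
(d+u)(eve)=exve=e,\qquad (eve)(d+u)=evxe=e.
\]
Thus $d+u$ is invertible in $eRe$ with inverse $eve$. Hence $d+u\in U(eRe)$ for every $u\in U(eRe)$, that is, $d\in\Delta(eRe)$.

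\emph{Conclusion.} We now have $a=f+d$ with $f\in eRe$, $f^m=f$, $d\in\Delta(eRe)$ and $fd=df$. So $a$ is strongly $m$-$\Delta$-clean in $eRe$. The only place needing care is Step 2, specifically checking that the inverse of $x$ in $R$ restricts correctly to $eRe$; the commutation of $x$ with $e$ handles this.
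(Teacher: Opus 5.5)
Your proof is correct and matches the paper's argument: Lemma~\ref{lem2.17} forces $f=efe$, and then $d\in eRe\cap\Delta(R)$ is shown to lie in $\Delta(eRe)$ by turning $u\in U(eRe)$ into the unit $u+(1-e)$ of $R$ and cutting back down by $e$. The only difference is a small improvement. You work with $d+u$ instead of $1-ur$, and you use that $x$ (hence $x^{-1}$) commutes with $e$ to show that $eve$ is a two-sided inverse; the paper only exhibits a one-sided inverse of $e-ur$ and leaves two-sidedness implicit.
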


\begin{proof}
Since $a$ is strongly $m$-$\Delta$-clean in $R$, there exist elements $f,d \in R$ such that $a=f+d$, where $f^m=f$, $d \in \Delta(R)$ and $fd=df$. As $a \in eRe$, we have $a=ea=ae$. Hence $(1-e)a=a(1-e)=0$. This implies that $1-e \in ann_l(a)\cap ann_r(a)$. By Lemma \ref{lem2.17}, it follows that $1-e \in ann_l(f)\cap ann_r(f)$. Therefore, $(1-e)f=f(1-e)=0$, which yields $f=ef=fe$. Now $a=f+d=efe+ede$. Since $f^m=f$ and $f=ef=fe$, we find that $(efe)^m=efe$. So $efe$ is an $m$-potent element of $eRe$. Thus it is enough to show that $ede \in \Delta(eRe)$. Now we prove that $eRe \cap \Delta(R) \subseteq \Delta(eRe)$. Let $r \in eRe \cap \Delta(R)$. Take any unit $u \in U(eRe)$. Then $(u+(1-e))(u^{-1}+(1-e))=1$. This implies that $u+(1-e) \in U(R)$. Since $r \in \Delta(R)$, it follows that $1-(u+(1-e))r \in U(R)$. Consequently, there exists an element $v \in R$ such that $(1-(u+(1-e))r)v=1$. Since $r \in eRe$, we have $(u+(1-e))r=ur$ and hence $(1-ur)v=1$. Now multiply the above equality by $e$ on both sides, we get $e(1-ur)ve=e$. Since $u,r \in eRe$, we obtain $(e-ur)eve=e$. Observe that $eve \in eRe$. Thus $e-ur$ is left invertible in $eRe$ for every $u \in U(eRe)$. Consequently, it follows that $r \in \Delta(eRe)$.
Hence $eRe \cap \Delta(R) \subseteq \Delta(eRe)$. Furthermore $ede=a-efe=a-f=d \in eRe \cap \Delta(R) \subseteq \Delta(eRe)$. Therefore, $d \in \Delta(eRe)$. Moreover, $fd=df$ and $f,d \in eRe$. Hence $a=efe+ede$ is a strongly $m$-$\Delta$-clean decomposition of $a$ in $eRe$. This shows that $a$ is strongly $m$-$\Delta$-clean in $eRe$.
\end{proof}

\begin{Corollary} \label{cor2.20}
Let $e$ be an idempotent in $R$. If $R$ is a strongly $m$-$\Delta$-clean ring, then $eRe$ is also a strongly $m$-$\Delta$-clean ring.   
\end{Corollary}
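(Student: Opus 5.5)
This is an immediate consequence of the preceding Theorem, applied to every element of the corner ring $eRe$. Fix an idempotent $e\in R$ and an arbitrary $a\in eRe$. Since $eRe\subseteq R$ and $R$ is strongly $m$-$\Delta$-clean, $a$ is strongly $m$-$\Delta$-clean as an element of $R$. That is, $a=f+d$ with $f^m=f$, $d\in\Delta(R)$ and $fd=df$.

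The hypothesis of the preceding Theorem is exactly that $a\in eRe$ is strongly $m$-$\Delta$-clean in $R$. Applying it, $a$ is strongly $m$-$\Delta$-clean in $eRe$. Since $a\in eRe$ was arbitrary, every element of $eRe$ admits such a decomposition, so $eRe$ is strongly $m$-$\Delta$-clean. This assumes the paper's convention that $eRe$ is a ring with identity $e$, which is what the definition requires.

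I will also recall why the transfer works, in case one wants to check the mechanism. By Lemma \ref{lem2.17}, both $1-e\in ann_l(a)$ and $1-e\in ann_r(a)$ pass to $f$. Hence $f=efe$, and then $d=a-f$ also lies in $eRe$. The remaining point is $eRe\cap\Delta(R)\subseteq\Delta(eRe)$. This holds by extending a unit $u$ of $eRe$ to the unit $u+(1-e)$ of $R$.

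There is no real obstacle, since all of the work was done in the Theorem. The only step I would double-check is the passage from the one-sided invertibility of $e-ur$ in $eRe$ to $r\in\Delta(eRe)$. The one-sided characterization of $\Delta$ from Proposition \ref{6p2.1} applies there: if $e-ur$ is left invertible in $eRe$ for every unit $u$ of $eRe$, then it is invertible, by the standard Jacobson-type argument. That step is already carried out in the proof of the Theorem.
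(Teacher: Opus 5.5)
Your proof is correct and is exactly how the paper obtains this corollary: it is the preceding Theorem applied to each $a\in eRe$, which is strongly $m$-$\Delta$-clean in $R$ by hypothesis. Your concern about one-sided invertibility does not arise, because $v$ is a two-sided inverse of the unit $1-ur=(e-ur)+(1-e)$ in $R$, so $eve$ is a two-sided inverse of $e-ur$ in $eRe$; note also that Proposition~\ref{6p2.1} is not a statement about one-sided invertibility.
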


\section{Characterizations of strongly $m$-$\Delta$-clean rings}
We say that $m$-potent element lifts modulo $\Delta(R)$ if, whenever $a\in R$ satisfies $a-a^{m}\in\Delta(R)$,
there exists an $m$-potent element $f\in R$ such that $a-f\in\Delta(R)$. If, in addition, $af=fa$, then we say that $m$-potent element lifts strongly modulo $\Delta(R)$.

\begin{Theorem}
Let $R$ be a ring with $2 \in U(R)$. Then the followings are equivalent :

$(i)$ $R$ is a strongly $m$-$\Delta$-clean ring.

$(ii)$ For any $x \in R$, $x-x^m \in \Delta(R)$ and each $m$-potent lifts strongly modulo $\Delta(R)$.
\end{Theorem}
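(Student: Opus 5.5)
For (i)$\Rightarrow$(ii) I would simply quote earlier results. Let $R$ be strongly $m$-$\Delta$-clean with $2\in U(R)$. Proposition \ref{dp2.15} gives $x-x^m\in\Delta(R)$ for every $x\in R$. For the lifting condition, take any $a\in R$ with $a-a^m\in\Delta(R)$ and write a strongly $m$-$\Delta$-clean decomposition $a=f+d$, where $f^m=f$, $d\in\Delta(R)$ and $fd=df$. Then $f$ is $m$-potent and $a-f=d\in\Delta(R)$. Since $f$ commutes with $d$, we get $af=f^2+df=f^2+fd=fa$. So $m$-potent elements lift strongly modulo $\Delta(R)$. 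This direction does not even need the hypothesis $a-a^m\in\Delta(R)$.

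For (ii)$\Rightarrow$(i), take an arbitrary $x\in R$. By the first part of (ii), $x-x^m\in\Delta(R)$. The strong lifting property then gives an $m$-potent $f$ with $x-f\in\Delta(R)$ and $xf=fx$. Put $d=x-f\in\Delta(R)$. Then $fd=fx-f^2=xf-f^2=df$, so $x=f+d$ is a strongly $m$-$\Delta$-clean decomposition. Hence $R$ is strongly $m$-$\Delta$-clean.

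There is no real obstacle here. The only nontrivial ingredient is Proposition \ref{dp2.15}, which is exactly where $2\in U(R)$ is used: through Proposition \ref{prop2.10} it makes $\Delta(R)$ closed under multiplication by the commuting $m$-potent $f$, so that $a^m=f+d'$ with $d'\in\Delta(R)$. The one point I would state explicitly is the commutation computation, namely that $xf=fx$ is equivalent to $fd=df$ when $d=x-f$.
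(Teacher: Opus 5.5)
Your proof is correct and matches the paper's argument step for step. Proposition~\ref{dp2.15} (the only place $2\in U(R)$ is used) gives $x-x^m\in\Delta(R)$, the strongly $m$-$\Delta$-clean decomposition itself supplies the strong lift, and conversely the strong lift of $x$ yields the decomposition $x=f+(x-f)$; your explicit check that $xf=fx$ is equivalent to $fd=df$ for $d=x-f$ fills in a step the paper only asserts.
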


\begin{proof}
$(i) \Longrightarrow (ii)$. 
Assume that $R$ is a strongly $m$-$\Delta$-clean ring and let $x \in R$. Then by Proposition \ref{dp2.15}, we have $x - x^m \in \Delta(R)$. Now let $a \in R$ be such that $a - a^m \in \Delta(R)$. Since $R$ is strongly $m$-$\Delta$-clean, there exist an $m$-potent element $f \in R$ and an element $d \in \Delta(R)$ such that $a = f + d$ and $af = fa$. Thus it follows that $a - f = d \in \Delta(R)$, where $f$ is $m$-potent and commutes with $a$. Therefore, every $m$-potent element lifts strongly modulo $\Delta(R)$.

$(ii) \Longrightarrow (i)$. 
Let $x \in R$. Then $x - x^m \in \Delta(R)$. Since each $m$-potent element lifts strongly modulo $\Delta(R)$, there exists an $m$-potent element $f \in R$ such that $x - f \in \Delta(R)$ and $xf = fx$. Set $d = x - f \in \Delta(R)$. Then $x = f + d$, where $f$ is $m$-potent and $d \in \Delta(R)$. Moreover, from $xf = fx$ it follows that $df = fd$. Hence $R$ is a strongly $m$-$\Delta$-clean ring.
\end{proof}

\begin{Proposition}
The following conditions are equivalent for a ring $R$:

$(i)$ $R$ is a $\Delta U$-ring.

$(ii)$ All $m$-clean elements of $R$ are $\Delta$-clean.
\end{Proposition}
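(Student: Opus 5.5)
The plan is to prove the two implications separately, reading ``$m$-clean element'' as $r=f+u$ with $f^m=f$ and $u\in U(R)$, and ``$\Delta$-clean element'' as $r=e+d$ with $e\in Id(R)$ and $d\in\Delta(R)$. Both directions rely on Lemma \ref{ls2.5} and Proposition \ref{p2.9}.

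For $(i)\Rightarrow(ii)$, let $r=f+u$ be an $m$-clean element of $R$.
\begin{enumerate}[(1)]
\item Since $R$ is a $\Delta U$-ring, write $u=1+d'$ with $d'\in\Delta(R)$.
\item Take $e=1-f^{m-1}$. Because $f^{m-1}$ is idempotent, $e$ is idempotent.
\item Compute $r-e=f+f^{m-1}+d'$. By Proposition \ref{p2.9}$(ii)$, applied to the $m$-potent $f$ and to $d'\in\Delta(R)$, this element lies in $\Delta(R)$.
\item Hence $r=e+(r-e)$ is $\Delta$-clean.
\end{enumerate}
This is the same idempotent $1-f^{m-1}$ that appears in the proof of Proposition \ref{dp2.5}.

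For $(ii)\Rightarrow(i)$, the inclusion $1+\Delta(R)\subseteq U(R)$ always holds by Lemma \ref{ls2.5}, so we only need $U(R)\subseteq 1+\Delta(R)$.
\begin{enumerate}[(1)]
\item Let $u\in U(R)$. Since $0$ is $m$-potent, $u=0+u$ is $m$-clean.
\item By hypothesis $u=e+d$ with $e\in Id(R)$ and $d\in\Delta(R)$.
\item Then $e=u+(-d)$. Since $\Delta(R)$ is a subring, $-d\in\Delta(R)$, so $e\in U(R)+\Delta(R)=U(R)$ by Lemma \ref{ls2.5}.
\item An idempotent unit equals $1$, because $e^2=e$ gives $e=1$ after multiplying by $e^{-1}$. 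Hence $e=1$ and $u=1+d\in 1+\Delta(R)$.
\end{enumerate}
Therefore $U(R)=1+\Delta(R)$, so $R$ is a $\Delta U$-ring.

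The main obstacle is in $(i)\Rightarrow(ii)$: a naive rewriting such as $r=f+1+d'$ does not produce an idempotent summand. The right choice is the idempotent $1-f^{m-1}$, and the difficulty is verifying that $f+f^{m-1}\in\Delta(R)$ in a $\Delta U$-ring; Proposition \ref{p2.9}$(ii)$ supplies exactly this. If ``$\Delta$-clean'' is instead read as $m$-$\Delta$-clean, the same argument works verbatim. In $(i)\Rightarrow(ii)$ one may take $f$ itself, since $r-f=u-1\in\Delta(R)$. In $(ii)\Rightarrow(i)$ one uses that an $m$-potent unit $g$ satisfies $g^{m-1}=1$ and argues with $g\in 1+\Delta(R)$ via Proposition \ref{p2.9}-type unit identities. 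I would settle the intended reading from the paper's conventions before finalizing.
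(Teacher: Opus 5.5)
Your main argument, under the reading the paper uses ($\Delta$-clean meaning an idempotent plus an element of $\Delta(R)$), is correct and is essentially the paper's proof. For $(i)\Rightarrow(ii)$ you use the same idempotent $1-f^{m-1}$ together with Proposition~\ref{p2.9}$(ii)$. For $(ii)\Rightarrow(i)$ you apply Lemma~\ref{ls2.5} directly to $e=u+(-d)$, whereas the paper rewrites $eu^{-1}=1-du^{-1}$ and uses closure of $\Delta(R)$ under multiplication by units; this is only a cosmetic difference.

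Delete your closing paragraph, however. There you write $r-f=u-1$, but in fact $r-f=u$, and no unit of a nonzero ring lies in $\Delta(R)$. Moreover, under the $m$-$\Delta$-clean reading the implication $(ii)\Rightarrow(i)$ is false. Take $R=\mathbb{Z}_3$ and $m=3$: every element is $3$-potent, so $(ii)$ holds trivially, while $U(R)=\{1,2\}\neq\{1\}=1+\Delta(R)$.
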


\begin{proof}
 \noindent
$(i) \Longrightarrow (ii)$. 
Let $r\in R$ be an $m$-clean element. Then $r=f+u$, where $f^m=f$ and $u\in U(R)$. Since $R$ is a $\Delta U$-ring, so $u=1+d$ for some $d\in\Delta(R)$. Hence $r=(1-f^{m-1})+(f+f^{m-1}+d)$. Since $(1-f^{m-1})^2=1-f^{m-1}$, the element $1-f^{m-1}$ is idempotent. Also from Proposition~\ref{p2.9}, we find that $f+f^{m-1}+d\in\Delta(R)$.
Thus $r$ is $\Delta$-clean.

\noindent
$(ii) \Longrightarrow (i)$. Let $u\in U(R)$. Then $u$ is $m$-clean and hence $u$ is $\Delta$-clean. Therefore, $u=e+d$, where $e^2=e$ and $d\in\Delta(R)$. Thus it follows that $eu^{-1}=1-du^{-1}$. By Proposition \ref{6p2.1} $(ii)$ $du^{-1}\in\Delta(R)$, then we have $-du^{-1}\in\Delta(R)$ and hence $1-du^{-1}\in 1+\Delta(R)\subseteq U(R)$. Therefore, $eu^{-1} \in U(R)$. This implies that $e \in U(R)$. Since $e$ is an idempotent as well as a unit, so $e = 1$. Consequently, $u = 1 + d \in 1 + \Delta(R)$. Thus $U(R) \subseteq 1 + \Delta(R)$. The reverse inclusion is immediate and hence $U(R) = 1 + \Delta(R)$. Therefore, $R$ is a $\Delta U$-ring.   
\end{proof}

A ring $R$ is called an \emph{$m$-potent ring} if every element of $R$ is $m$-potent, i.e., $a^m=a$ for all $a\in R$. 

\begin{Theorem} \label{thm3.4}
A ring $R$ is an $m$-potent ring if and only if $R$ is a strongly $m$-$\Delta$-clean ring and $\Delta(R)=0$.
\end{Theorem}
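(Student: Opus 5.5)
The converse direction is immediate. Assume $R$ is strongly $m$-$\Delta$-clean and $\Delta(R)=0$. For any $a\in R$ take a decomposition $a=f+d$ with $f^m=f$, $d\in\Delta(R)=0$ and $fd=df$. Then $d=0$, so $a=f$ is $m$-potent. Hence $R$ is an $m$-potent ring.

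For the forward direction, assume $a^m=a$ for all $a\in R$. Strong $m$-$\Delta$-cleanness is trivial: write $a=a+0$, and note $0\in\Delta(R)$ commutes with $a$. The real content is $\Delta(R)=0$, and I would prove it in three steps.

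\emph{Step 1: $J(R)=0$.} For $x\in J(R)$ we have $x=x^m=x\cdot x^{m-1}$, so $x(1-x^{m-1})=0$. Since $x^{m-1}\in J(R)$, the element $1-x^{m-1}$ is a unit, and therefore $x=0$.

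\emph{Step 2: $\Delta(R)$ is a ring with no nonzero nilpotents in which every element is quasi-invertible within $\Delta(R)$.} Take $d\in\Delta(R)$. Since $\Delta(R)$ is a subring, $d^{m-1}\in\Delta(R)$, so $1-d^{m-1}\in U(R)$ by Lemma \ref{ls2.5}. Again $d(1-d^{m-1})=d-d^m=0$, which forces $d=0$. This one computation actually gives $\Delta(R)=0$ directly, without needing Step 1 or the structure of $\Delta(R)$.

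\emph{The key point}, and the only potential obstacle, is checking that $1-d^{m-1}$ is a unit. This holds because $-d^{m-1}\in\Delta(R)$ and $1\in U(R)$, so $1+(-d^{m-1})\in U(R)+\Delta(R)=U(R)$ by Lemma \ref{ls2.5}. Equivalently, by the definition of $\Delta(R)$ with $u=1$, the element $1-x$ is a unit for every $x\in\Delta(R)$. The case $m=2$ needs no separate treatment, since then $d^{m-1}=d$ and the same argument applies.

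Combining the two directions proves the theorem.
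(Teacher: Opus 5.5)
Your proof is correct and follows the paper's argument essentially verbatim. The converse is the same one-line observation, and the forward direction is exactly the computation $d(1-d^{m-1})=d-d^m=0$ with $1-d^{m-1}\in U(R)$ because $d^{m-1}\in\Delta(R)$. Your Step 1 ($J(R)=0$) and the unproved structural claims in the heading of Step 2 are unnecessary and should be dropped, as you yourself observe.
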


\begin{proof}
Let $R$ be an $m$-potent ring and let $x\in\Delta(R)$. Since $\Delta(R)$ is a subring of $R$, we have $x^{m-1}\in\Delta(R)$. By the definition of $\Delta(R)$, $1 - x^{m-1}$ is a unit in $R$. Moreover, $x^m=x$ and hence $x(1 - x^{m-1}) = 0$. Since $1-x^{m-1}$ is invertible, it follows that $x=0$. Thus $\Delta(R) = 0$ . Since every element of $R$ is $m$-potent and $0\in\Delta(R)$, $R$ is a strongly $m$-$\Delta$-clean ring.

Conversely, suppose that $R$ is a strongly $m$-$\Delta$-clean ring and $\Delta(R)=0$. Then for any $a\in R$, we have $a=f+d$, where $f^m=f$ and $d\in\Delta(R)$. Thus $d=0$ as $\Delta(R)=0$. Therefore, every element of $R$ is $m$-potent and hence $R$ is an $m$-potent ring. \end{proof}

A ring $R$ is called a semipotent ring if each one-sided ideal $I$ not contained in $J(R)$ contains a nonzero idempotent. A ring $R$ is called local if it has a unique maximal ideal.

\begin{Theorem} \label{dthm2.21}
Let $R$ be a ring with only trivial idempotents $0$ and $1$. Then $R$ is a semipotent ring if and only if $R$ is a local ring.
\end{Theorem}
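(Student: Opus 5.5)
The plan is to prove the two implications separately. Throughout I will use only the definition of semipotent (every one-sided ideal not contained in $J(R)$ contains a nonzero idempotent) together with the standard characterization that $R$ is local if and only if $R/J(R)$ is a division ring, equivalently if and only if every element of $R\setminus J(R)$ is a unit.

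\emph{Local $\Rightarrow$ semipotent.} Let $R$ be local and let $I$ be a one-sided ideal (say a right ideal; the left case is symmetric) with $I\not\subseteq J(R)$. Choose $a\in I\setminus J(R)$. Since $R$ is local, $a\in U(R)$, so $1=aa^{-1}\in I$. Then $I$ contains the nonzero idempotent $1$. This direction does not even need the hypothesis on idempotents, beyond $1\neq 0$.

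\emph{Semipotent $\Rightarrow$ local.} Assume $R$ is semipotent and $Id(R)=\{0,1\}$. I will show that every $a\notin J(R)$ is a unit.
\begin{itemize}
\item Take $a\notin J(R)$ and consider the principal right ideal $aR$. If $aR\subseteq J(R)$, then $a\in J(R)$, a contradiction. Hence $aR\not\subseteq J(R)$.
\item By semipotency, $aR$ contains a nonzero idempotent. The only nonzero idempotent is $1$, so $1\in aR$. Thus $ab=1$ for some $b$, and $a$ is right invertible.
\item Applying the same argument to the left ideal $Ra$ gives $ca=1$ for some $c$. Therefore $b=c$ and $a\in U(R)$.
\end{itemize}
Consequently $R\setminus J(R)\subseteq U(R)$. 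Since units never lie in $J(R)$, the non-units of $R$ are exactly the elements of $J(R)$, which form an ideal. Hence $R$ is local. Here I also need $1\neq 0$, which holds because the hypothesis speaks of two trivial idempotents.

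The only step that needs care is obtaining two-sided invertibility, which I handle by using both the right ideal $aR$ and the left ideal $Ra$. If the paper's definition of semipotent is read as only right ideals, there is a fallback. From right invertibility of every non-$J$ element, take $ab=1$; then $b\notin J(R)$, since otherwise $1=ab\in J(R)$. So $b$ also has a right inverse, $bb'=1$. It follows that $a=a(bb')=(ab)b'=b'$, so $ba=1$ as well. With this argument, either reading of the definition suffices.
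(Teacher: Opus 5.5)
Your proof is correct and follows the paper's argument in both directions: for local $\Rightarrow$ semipotent, a unit in $I\setminus J(R)$ puts $1$ in $I$; for the converse, an element outside $J(R)$ generates an ideal whose only possible nonzero idempotent is $1$. Your handling of two-sided invertibility, via both $aR$ and $Ra$ or via the fallback with $ab=1$ and $bb'=1$, is more careful than the paper's, which passes from $1\in(x)$ to $1=tx$ and then calls $x$ a unit on the strength of a one-sided inverse alone.
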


\begin{proof}
Suppose that $R$ is a semipotent ring. Let $x \in R$ be such that $x \notin U(R)$. We claim that $x \in J(R)$. If possible, let $x \notin J(R)$. Then the principal ideal $(x)$ is not contained in $J(R)$. Since $R$ is a semipotent ring, there exists a nonzero idempotent $e \in (x)$. By hypothesis, the only idempotents in $R$ are $0$ and $1$. So $e = 1$. Hence $1 \in (x)$. Thus there exists $t \in R$ such that $1 = tx$. This shows that $x$ is a unit, contradicting the assumption that $x \notin U(R)$. Therefore, $x \in J(R)$ and hence every non-unit of $R$ lies in $J(R)$. Consequently, $R$ is a local ring.

Conversely, suppose that $R$ is a local ring. Then $R = U(R) \cup J(R)$. Let $I$ be an ideal of $R$ such that $I \nsubseteq J(R)$. Then there exists $x \in I$ such that $x \notin J(R)$. Since $R$ is a local ring, this implies that $x \in U(R)$. So $1 \in I$ and hence $I = R$. In particular, $I$ contains the idempotent $1 \neq 0$. Thus every ideal of $R$ not contained in $J(R)$ contains a nonzero idempotent. Therefore, $R$ is a semipotent ring.
\end{proof}

\begin{Theorem} \label{dthm2.20}
Let $R$ be a ring with only trivial idempotents $0$ and $1$. If $R$ is a $m$-$\Delta$-clean ring, then $R$ is a local ring.
\end{Theorem}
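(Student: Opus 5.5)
The plan is to reduce to the classical fact that a ring in which, for every $x$, either $x$ or $1-x$ is a unit, is local. The trivial-idempotent hypothesis makes this reduction immediate once we know $R$ is clean.

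\emph{Step 1 (every element is $u$ or $1+u$).} By Proposition~\ref{dp2.5}, $R$ is clean. Concretely, for $r=f+d$ with $f^m=f$ and $d\in\Delta(R)$, we have $r=e+u$ with $e=1-f^{m-1}$ idempotent and $u=f+f^{m-1}-1+d\in U(R)$. Since the only idempotents are $0$ and $1$, either $r=u\in U(R)$ or $r=1+u$. In the second case $1-r=-u\in U(R)$. So for every $r\in R$, either $r\in U(R)$ or $1-r\in U(R)$.

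\emph{Step 2 (Dedekind-finiteness).} Suppose $ab=1$ in $R$. Then $ba$ is idempotent, since $(ba)^2=b(ab)a=ba$, so $ba\in\{0,1\}$. If $ba=0$, then $1=(ab)^2=a(ba)b=0$, which is impossible for $R\neq 0$. Hence $ba=1$. Thus every one-sided invertible element is a unit.

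\emph{Step 3 (non-units lie in $J(R)$).} Let $x\notin U(R)$ and $r\in R$. If $rx$ were a unit, then $x$ would be left invertible, hence a unit by Step~2, a contradiction. So $rx\notin U(R)$, and Step~1 gives $1-rx\in U(R)$. As this holds for all $r$, we get $x\in J(R)$. Therefore $R\setminus U(R)\subseteq J(R)$, and the reverse inclusion is trivial. So $R\setminus U(R)=J(R)$ is an ideal, and $J(R)$ is the unique maximal (left, right, two-sided) ideal, i.e.\ $R$ is local.

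The only delicate point is noncommutativity: passing from ``$x$ or $1-x$ is a unit'' to ``non-units form an ideal'' requires ruling out one-sided inverses. Step~2 handles this using the trivial idempotents once more.
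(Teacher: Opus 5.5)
Your proof is correct and is essentially the paper's argument. The paper splits directly on the idempotent $f^{m-1}\in\{0,1\}$, getting either $x\in\Delta(R)$ (so $1-x\in U(R)$) or $x\in U(R)+\Delta(R)=U(R)$, which is exactly your case split on $e=1-f^{m-1}$ in the clean decomposition; your Steps 2--3 only make explicit the standard passage from ``$x$ or $1-x$ is a unit'' to locality in the noncommutative case, which the paper asserts in one line (and which in fact holds in any nonzero ring even without the trivial-idempotent hypothesis).
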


\begin{proof}
Let $x\in R$. Then $x=f+d$, where $f^m=f$ and $d\in\Delta(R)$. Thus $f^{m-1}$ is an idempotent. As $R$ has only trivial idempotents, either $f^{m-1}=0$ or $f^{m-1}=1$. If $f^{m-1}=0$, then $f=f^m=f^{m-1}f=0$ and hence $x=d\in\Delta(R)$. Therefore, $1-x\in U(R)$. If $f^{m-1}=1$, then $f\in U(R)$. Hence $x=f+d\in U(R)+\Delta(R)=U(R)$ by Lemma \ref{ls2.5}. Thus for every $x\in R$, either $x\in U(R)$ or $1-x\in U(R)$. Hence $R$ is a local ring.
\end{proof}

The following result follows immediately from Theorem~\ref{dthm2.21}.
\begin{Corollary}
Let $R$ be a ring with only trivial idempotents $0$ and $1$. If $R$ is an $m$-$\Delta$-clean ring, then $R$ is a semipotent ring.
\end{Corollary}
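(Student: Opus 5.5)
The corollary follows by chaining the two preceding theorems, so I will not argue from scratch.

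First, since $R$ has only the trivial idempotents $0$ and $1$ and is $m$-$\Delta$-clean, Theorem~\ref{dthm2.20} shows that $R$ is a local ring.

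Second, I apply the converse direction of Theorem~\ref{dthm2.21}. Its hypothesis that $R$ has only trivial idempotents is the same standing assumption, so the local ring $R$ is semipotent.

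The one point needing care is that the proof of Theorem~\ref{dthm2.21} treats ideals $I\nsubseteq J(R)$, whereas semipotency is defined for one-sided ideals. So I will recheck that the converse direction works for a one-sided ideal, say a right ideal $I\nsubseteq J(R)$.

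1. Pick $x\in I$ with $x\notin J(R)$.
2. Since $R$ is local, $R=U(R)\cup J(R)$, so $x\in U(R)$.
3. Then $1=xx^{-1}\in I$.
4. Hence $I$ contains the nonzero idempotent $1$.

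The left ideal case is symmetric, using $1=x^{-1}x$. With this check, Theorem~\ref{dthm2.21} applies as needed and the corollary follows. I expect no real obstacle beyond this bookkeeping.
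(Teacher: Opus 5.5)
Your proposal is correct and follows the paper's route exactly: Theorem~\ref{dthm2.20} shows that $R$ is local, and the converse direction of Theorem~\ref{dthm2.21} then gives that $R$ is semipotent. Your extra check for one-sided ideals, using $xx^{-1}\in I$ for a right ideal and $x^{-1}x\in I$ for a left ideal, is worth keeping, since the paper's proof of Theorem~\ref{dthm2.21} only treats ``ideals'' while semipotency is defined using one-sided ideals.
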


\begin{Remark}
The converse of Theorem~\ref{dthm2.20} does not hold, in general. Indeed, the ring $\mathbb{Z}_3$ has only trivial idempotents and $\mathbb{Z}_3$ is also a local ring. However, $\mathbb{Z}_3$ is not $2$-$\Delta$-clean, since $2$ cannot be expressed as the sum of an idempotent element and an element of $\Delta(\mathbb{Z}_3)$. Thus, $2$ is not a $2$-$\Delta$-clean element of $\mathbb{Z}_3$.
\end{Remark}

For clarity, we use the notation $U_{m-1}$ to represent the set of $(m-1)$-th roots of unity which is defined as follows : $U_{m-1}(R) = \{ u \in R : u^{m-1} = 1 \}$.

\begin{Theorem}
Let $R$ be a ring and $U_{m-1}(R) \subseteq 1+\Delta(R)$. Then $R$ is a strongly $m$-$\Delta$-clean ring if and only if $R$ is a strongly $\Delta$-clean ring.\end{Theorem}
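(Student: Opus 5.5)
The backward direction needs no work: the paper has already observed that every strongly $\Delta$-clean ring is strongly $m$-$\Delta$-clean. Indeed, an idempotent $e$ satisfies $e^m=e$, so a commuting decomposition $a=e+d$ with $e\in Id(R)$ and $d\in\Delta(R)$ is already a strongly $m$-$\Delta$-clean decomposition. The substance is the forward direction, and there the plan is to convert a commuting $m$-potent into a commuting idempotent by absorbing the difference into $\Delta(R)$.

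Let $a\in R$ and write $a=f+d$ with $f^m=f$, $d\in\Delta(R)$ and $fd=df$. Set $e=f^{m-1}$, which is idempotent since $f^m=f$, and commutes with $d$ because $f$ does. Then
\[
a=e+\big((f-f^{m-1})+d\big),
\]
so it suffices to show $f-f^{m-1}\in\Delta(R)$. Commutativity of $e$ with $(f-f^{m-1})+d$ is automatic, since everything is a polynomial in $f$ and $d$, and these commute. Closure of $\Delta(R)$ under addition (Proposition \ref{6p2.1}$(iii)$) then gives the decomposition.

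The key step is using the hypothesis $U_{m-1}(R)\subseteq 1+\Delta(R)$. Consider $u=f+(1-f^{m-1})$. Since $f$ and $1-f^{m-1}$ are orthogonal, because $f(1-f^{m-1})=f-f^m=0$ and likewise on the other side, the binomial expansion of $u^{m-1}$ has no cross terms. Hence
\[
u^{m-1}=f^{m-1}+(1-f^{m-1})^{m-1}=f^{m-1}+1-f^{m-1}=1.
\]
For $m=2$ this is just $u=1$. So $u\in U_{m-1}(R)$, and therefore $u-1=f-f^{m-1}\in\Delta(R)$. This finishes the forward direction.

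I expect the only real obstacle to be choosing the correct root of unity $u$ attached to $f$. It must be exactly $1$ plus the correction term $f-f^{m-1}$. The orthogonality check above is what makes it work, and it should be verified carefully, including the degenerate case $m=2$, where everything is already trivial.
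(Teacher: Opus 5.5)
Your proposal is correct and matches the paper's proof: the paper takes the same element $u=f-f^{m-1}+1\in U_{m-1}(R)$, deduces $f-f^{m-1}\in\Delta(R)$, writes $a=f^{m-1}+\bigl((f-f^{m-1})+d\bigr)$, and treats the converse as immediate since idempotents are $m$-potent. Your orthogonality argument for $u^{m-1}=1$ supplies a verification that the paper states without detail.
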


\begin{proof}
Suppose that $R$ is a strongly $m$-$\Delta$-clean ring. Then $a=f+d$, where $f^m=f$, $d\in\Delta(R)$ and $fd=df$. Consider the element $u=f-f^{m-1}+1$. Then $u^{m-1}=1$ and hence $u\in U_{m-1}(R)$. Since $U_{m-1}(R)\subseteq1+\Delta(R)$, so $u=1+d'$ for some $d'\in\Delta(R)$. Therefore, $f=f^{m-1}+d'$. Consequently, $a=f^{m-1}+(d+d')$. Since $d+d'\in\Delta(R)$, $f^{m-1}$ is an idempotent and $f^{m-1}(d+d')=(d+d')f^{m-1}$,
it follows that $a$ is a strongly $\Delta$-clean element in $R$. Hence $R$ is a strongly $\Delta$-clean ring.

Conversely, suppose that $R$ is a strongly $\Delta$-clean ring. Since every idempotent is also $m$-potent, so every strongly $\Delta$-clean decomposition of an element in $R$ is also a strongly $m$-$\Delta$-clean decomposition. Hence $R$ is a strongly $m$-$\Delta$-clean ring.
\end{proof}  

Let $A$ and $B$ be rings, and let $M$ and $N$ be $(A,B)$- and $(B,A)$-bimodules, respectively. A \emph{Morita context} is a $6$-tuple $(A,B,M,N,\psi,\varphi)$, where
$\psi:M\times N\longrightarrow A$ and $\varphi:N\times M\longrightarrow B$ are additive pairings satisfying the usual associativity conditions. The set $R=
\begin{pmatrix}
A&M\\
N&B
\end{pmatrix}$ forms a ring under the usual matrix addition and multiplication induced by the bimodule actions and the pairings $\psi$ and $\varphi$. This ring is called the \emph{Morita context ring}.

\begin{Lemma} \label{lem3.9}
Let $R=\begin{pmatrix} A & M \\ N & B \end{pmatrix}$ be the Morita context. If $R$ is a strongly $m$-$\Delta$-clean ring, then $A$ and $B$ are strongly $m$-$\Delta$-clean rings with $MN \subseteq J(A)$ and $NM \subseteq J(B)$. \end{Lemma}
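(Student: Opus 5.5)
The plan has two parts. First I show that $A$ and $B$ are strongly $m$-$\Delta$-clean by viewing them as corners of $R$. Then I show $MN\subseteq J(A)$ and $NM\subseteq J(B)$ using Lemma \ref{lem2.16} and the fact that square-zero elements lie in $J$ of a ring whose quotient by $J$ is reduced.

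\textbf{Corner rings.} Let $e=\begin{pmatrix}1&0\\0&0\end{pmatrix}$, which is an idempotent of $R$. Then $eRe=\begin{pmatrix}A&0\\0&0\end{pmatrix}\cong A$ as rings, and similarly $(1-e)R(1-e)\cong B$. By Corollary \ref{cor2.20}, both $A$ and $B$ are strongly $m$-$\Delta$-clean.

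\textbf{Step 1: $MN\subseteq J(A)$.} Fix $x\in M$ and $y\in N$.
\begin{itemize}
\item The element $X=\begin{pmatrix}0&x\\0&0\end{pmatrix}$ satisfies $X^2=0$.
\item By Lemma \ref{lem2.16}, $R/J(R)$ is reduced, so every nilpotent element of $R$ lies in $J(R)$. In particular $X\in J(R)$.
\item Since $J(R)$ is an ideal, $XY\in J(R)$ for $Y=\begin{pmatrix}0&0\\y&0\end{pmatrix}$, and $XY=\begin{pmatrix}xy&0\\0&0\end{pmatrix}$.
\end{itemize}

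\textbf{Step 2: transfer to $J(A)$.} I use the standard fact $J(eRe)=eJ(R)e$. Since $XY\in J(R)$ and $XY=e(XY)e$, we get $XY\in eJ(R)e=J(eRe)$. Under the isomorphism $eRe\cong A$ this means $xy\in J(A)$.

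If one prefers to avoid the corner fact, a direct check works. For $a\in A$, the element $1_R-\begin{pmatrix}a&0\\0&0\end{pmatrix}XY$ is a unit of $R$. Its inverse $W$ can be compressed by $e$: since $(1_R-Z)W=1_R$ with $Z\in eRe$, applying $e$ on both sides gives $(e-Z)eWe=e$. Hence $1_A-axy$ is left invertible in $A$ for every $a$, which gives $xy\in J(A)$. This is the same compression trick already used in the corner theorem.

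Because $J(A)$ is additively closed, sums $\sum x_iy_i$ also lie in $J(A)$, so $MN\subseteq J(A)$. The same argument with $N$ and $M$ interchanged gives $NM\subseteq J(B)$.

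The main obstacle is the step from $X^2=0$ to $X\in J(R)$. This needs the remark that reducedness of $R/J(R)$ forces every nilpotent element into $J(R)$. That is exactly the preliminary claim proved inside Lemma \ref{lem2.16}: $x^2\in J(R)$ implies $x\in J(R)$. Everything else is routine.
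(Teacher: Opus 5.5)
Your proof is correct and follows the same route as the paper: corners via Corollary~\ref{cor2.20}, then Lemma~\ref{lem2.16} to put the square-zero matrices $\begin{pmatrix}0&x\\0&0\end{pmatrix}$ and $\begin{pmatrix}0&0\\y&0\end{pmatrix}$ into $J(R)$, then multiply. The one difference is how you pass from $J(R)$ to $J(A)$ and $J(B)$: you use the standard identity $J(eRe)=eJ(R)e$ (or a direct compression) instead of the paper's cited description of the Jacobson radical of a Morita context, which is a valid and more self-contained substitute. One cosmetic slip in your direct check: $(e-Z)\,eWe=e$ gives a right inverse of $1_A-axy$, not a left one, but the symmetric computation with $W(1_R-Z)=1_R$ gives $eWe\,(e-Z)=e$, so $1_A-axy$ is a unit and the conclusion stands.
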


\begin{proof}
Consider the idempotent $e=\begin{pmatrix}
1 & 0\\
0 & 0
\end{pmatrix}\in R$. Then $eRe\cong A$ and $(1-e)R(1-e)\cong B$. Since $R$ is a strongly $m$-$\Delta$-clean ring, from Corollary \ref{cor2.20}, it follows that the corner rings $eRe$ and $(1-e)R(1-e)$ are also strongly $m$-$\Delta$-clean rings. Hence $A$ and $B$ are strongly $m$-$\Delta$-clean rings. Next from Lemma \ref{lem2.16}, it follows that every nilpotent element of a strongly $m$-$\Delta$-clean ring belongs to $J(R)$. Therefore, for every $m\in M$ and $n\in N$, we obtain
$\begin{pmatrix}
0 & m\\
0 & 0
\end{pmatrix},
\,
\begin{pmatrix}
0 & 0\\
n & 0
\end{pmatrix}
\in J(R)$. Therefore, it follows that
$\begin{pmatrix}
0 & m\\
n & 0
\end{pmatrix}
=
\begin{pmatrix}
0 & m\\
0 & 0
\end{pmatrix}
+
\begin{pmatrix}
0 & 0\\
n & 0
\end{pmatrix}
\in J(R)$
for every $m\in M$ and $n\in N$. Now let $m_1,m_2\in M$ and $n_1,n_2\in N$. Then 
$\begin{pmatrix}
m_1n_1 & 0\\
0 & n_2m_2
\end{pmatrix}
=
\begin{pmatrix}
0 & m_1\\
n_2 & 0
\end{pmatrix}
\begin{pmatrix}
0 & m_2\\
n_1 & 0
\end{pmatrix}
\in J(R)$. Consequently,
$\begin{pmatrix}
MN & M\\
N & NM
\end{pmatrix}
\subseteq J(R)$.
Finally, by~\cite[Theorem~1]{s}, it follows that $MN\subseteq J(A)$ and $NM\subseteq J(B)$. This completes the proof.
\end{proof}    

From \cite{b1}, we recall that a ring $R$ is called \emph{strongly $m$-nil clean} if every element $a\in R$ can be expressed as $a=f+n$, where $f^m=f$, $n\in Nil(R)$ and $fn=nf$. The following is an immediate consequence of \cite[Theorem 1.10]{b1}, by considering conditions $(1)$ and $(2)$ of that theorem.

\begin{Proposition} \cite[Theorem 1.10]{b1} \label{mnp3.10}
Let $R$ be a ring and $m \geq 2$ be an integer with $m \not\equiv 1~(mod~3)$, $m \not\equiv 1~(mod~8)$. Then the following conditions are equivalent :

$(i)$ $R$ is strongly $m$-nil clean.

$(ii)$ $J(R)$ is nil and $R/J(R)$ is $m$-potent.
\end{Proposition}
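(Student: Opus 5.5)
This statement is cited from \cite[Theorem 1.10]{b1}, so one option is to appeal to that paper. A self-contained plan is sketched below. Throughout, let $m\not\equiv 1 \pmod 3$ and $m\not\equiv 1\pmod 8$.

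For $(i)\Rightarrow(ii)$, let $a\in R$ and write $a=f+n$ with $f^m=f$, $n$ nilpotent and $fn=nf$. Since $f$ and $n$ commute, expanding gives
\[
a^m-a=(f+n)^m-(f+n)=n\,g(f,n)-n
\]
for some polynomial $g$ in the commuting elements $f,n$. Hence $a^m-a$ is a product of $n$ with elements commuting with $n$, so it is nilpotent. We first want to show that $J(R)$ is nil. Take $j\in J(R)$ and write $j=f+n$ as above. Then $f=j-n$ is the difference of two commuting elements, one in $J(R)$ and one nilpotent. Since $f$ is $m$-potent, $e=f^{m-1}$ is an idempotent commuting with $j$ and $n$. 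Moreover $e=fe\cdot f^{m-2}$ lies in $J(R)+Nil$ in the commuting sense, so $1-e$ is a unit plus a commuting nilpotent correction. This forces $e=0$, hence $f=0$ and $j=n$ is nilpotent.

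Next, every $x\in R$ has $x-x^m$ nilpotent, by the expansion above. To get $R/J(R)$ $m$-potent we need $x-x^m\in J(R)$. This is the main obstacle: nilpotent does not imply lying in $J(R)$ in general. Here the congruence conditions on $m$ enter. The standard route is to show that $R/J(R)$ is reduced, using that $2\cdot 3$ or the relevant integer $m-1$ behaves well. Concretely, one shows that nilpotents lie in $J(R)$. For this one tests $\bigl(\begin{smallmatrix}0&1\\0&0\end{smallmatrix}\bigr)$-type obstructions: if $R/J(R)$ had a nonzero nilpotent, one would get a corner isomorphic to a $2\times 2$ matrix ring over a nonzero ring. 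One then checks that $M_2(\mathbb{Z}_2)$ or $M_2(\mathbb{Z}_3)$ is strongly $m$-nil clean only when $m\equiv1\pmod 8$ or $m\equiv1\pmod 3$, respectively. This is exactly where the excluded residues appear, and I would try this matrix computation first.

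For $(ii)\Rightarrow(i)$, assume $J(R)$ is nil and $R/J(R)$ is $m$-potent. Then $R/J(R)$ is commutative by Jacobson's theorem. For $a\in R$ the element $a-a^m$ lies in $J(R)$, hence is nilpotent. Now lift $\bar a$ to an $m$-potent element by a polynomial in $a$. The polynomial $x^m-x$ has separable factors, since $\bar a$ generates a finite reduced subring. So the lift can be obtained by a Newton-type iteration $a_{k+1}=a_k-(a_k^m-a_k)h(a_k)$, which converges in finitely many steps because $a-a^m$ is nilpotent. The resulting $f$ is a polynomial in $a$, so it commutes with $a$. Finally, $a-f$ is a polynomial in $a$ that lies in $J(R)$, hence is nilpotent. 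This gives the required decomposition $a=f+(a-f)$.
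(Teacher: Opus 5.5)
The paper gives no proof of this statement; it simply quotes it from \cite[Theorem 1.10]{b1}, so your first option is exactly what the paper does. Your self-contained sketch does not go through as written. Your argument that $J(R)$ is nil is correct, but both remaining steps have genuine gaps.

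\textbf{The lifting in $(ii)\Rightarrow(i)$ fails.} Newton's iteration for $g(x)=x^m-x$ needs $g'(\bar a)=m\bar a^{m-1}-1$ to be invertible modulo the nil ideal. This fails whenever $\bar a$ has a nonzero component in a residue field of characteristic $p$ with $p\mid m-1$. There $g'(\bar a)=m-1=0$ and $\bar a$ is a repeated root of $x^m-x$; for example, $x^3-x=x(x+1)^2$ over $\mathbb{F}_2$.

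This happens within the hypotheses. Take $m=3$, $R=\mathbb{Z}_{16}$ (so $J(R)=2\mathbb{Z}_{16}$ is nil and $R/J(R)\cong\mathbb{F}_2$) and $a=3$. In $\mathbb{Z}_{16}$ we have $3^3=11$ and $11^3=3$, so $a_k^3-a_k=8$ for $a_k\in\{3,11\}$. Hence every iterate $a_k-(a_k^3-a_k)h(a_k)$ stays in $\{3,11\}$, while the tripotents of $\mathbb{Z}_{16}$ are $0,1,7,9,15$. A lift ($f=1$) exists, but it cannot be reached by moving inside $a+(a^m-a)R$.

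A correct lift goes as follows. Since $2^m-2\in J(R)$ is nilpotent, $R$ has finite characteristic, and $(a-a^m)^k=0$ makes $A=\mathbb{Z}[a]$ a finite commutative ring. Split $A$ into local factors; the splitting idempotents are polynomials in $a$. Send a zero residue to $0$. Lift a nonzero residue $\alpha\in\mathbb{F}_q$ by Hensel's lemma applied to $x^d-1$, where $d$ is the multiplicative order of $\alpha$. Since $d$ divides $q-1$, it is prime to $p$, so $x^d-1$ is separable. Since $d$ also divides $m-1$, the lift satisfies $f^m=f$.

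\textbf{The key step of $(i)\Rightarrow(ii)$ is asserted, not proved.} This is where all the difficulty lies. The claim that a nonzero nilpotent in $R/J(R)$ yields a corner $\cong M_2(S)$ with $S\neq 0$ is not automatic. It needs $R/J(R)$ to be an exchange (semipotent) ring, which you do not establish; it does follow from strong $\pi$-regularity, since $a-a^m$ nilpotent gives $a^k\in a^{k+1}R$. It also needs a Levitzki-type lemma.

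Even granting the corner, $S$ is arbitrary, so checking only $M_2(\mathbb{Z}_2)$ and $M_2(\mathbb{Z}_3)$ proves nothing. Your residues are also swapped: $\mathbb{F}_4\subseteq M_2(\mathbb{Z}_2)$ forces $3\mid m-1$, and $\mathbb{F}_9\subseteq M_2(\mathbb{Z}_3)$ forces $8\mid m-1$.

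A clean route uses only the property that $x-x^m$ is nilpotent for all $x$. Unlike strong $m$-nil cleanness, this property passes to subrings and homomorphic images.
\begin{enumerate}[(1)]
\item Suppose a primitive image $P$ of $R$ is not a division ring. The density theorem gives a subring of $P$ mapping onto $M_2(D)$ for some division ring $D$.
\item The element $2-2^m$ is nilpotent, hence zero, in $D$, so $\operatorname{char}D=p$ is prime.
\item Then $\mathbb{F}_{p^2}\subseteq M_2(\mathbb{F}_p)\subseteq M_2(D)$ forces $(p^2-1)\mid(m-1)$. That means $3\mid m-1$ if $p=2$ and $8\mid m-1$ if $p$ is odd, contrary to hypothesis.
\item So every primitive image of $R$ is a division ring. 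There, nilpotent $x-x^m$ means $x=x^m$.
\item Hence $R/J(R)$, being a subdirect product of these images, is $m$-potent.
\end{enumerate}
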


\begin{Theorem} \label{lem3.10}
Let $R$ be a ring and $m \geq 2$ be an integer with $m \not\equiv 1~(mod~3)$, $m \not\equiv 1~(mod~8)$. Then $R$ is a strongly $m$-nil clean ring if and only if $R$ is a strongly $m$-$\Delta$-clean ring and $\Delta(R) \subseteq Nil(R)$.     
\end{Theorem}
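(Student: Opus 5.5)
The plan is to prove both directions separately. The forward direction goes through Proposition~\ref{mnp3.10}; the converse is essentially immediate.

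\emph{($\Rightarrow$)} Assume $R$ is strongly $m$-nil clean. By Proposition~\ref{mnp3.10}, $J(R)$ is nil and $R/J(R)$ is $m$-potent. The first goal is to show $\Delta(R)\subseteq Nil(R)$. Since $J(R)$ is an ideal contained in $J(R)$, we have $\Delta(R/J(R))=\Delta(R)/J(R)$. As $R/J(R)$ is $m$-potent, Theorem~\ref{thm3.4} gives $\Delta(R/J(R))=0$, and hence $\Delta(R)=J(R)$. Because $J(R)$ is nil, $\Delta(R)\subseteq Nil(R)$.

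Next we show $R$ is strongly $m$-$\Delta$-clean. Take $a\in R$ with a decomposition $a=f+n$, where $f^m=f$, $n\in Nil(R)$ and $fn=nf$. It suffices to show $n\in\Delta(R)=J(R)$. Work in $\bar R=R/J(R)$. There $\bar n$ is nilpotent, and since $\bar R$ is $m$-potent we have $\bar n=\bar n^m=\bar n^{m^k}$ for every $k$. Choosing $k$ large gives $\bar n=0$, so $n\in J(R)\subseteq\Delta(R)$. Thus the nil-clean decomposition is already a strongly $m$-$\Delta$-clean decomposition.

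\emph{($\Leftarrow$)} Assume $R$ is strongly $m$-$\Delta$-clean and $\Delta(R)\subseteq Nil(R)$. Any decomposition $a=f+d$ with $f^m=f$, $d\in\Delta(R)$ and $fd=df$ then has $d$ nilpotent. So it is directly a strongly $m$-nil clean decomposition, and $R$ is strongly $m$-nil clean.

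The only step needing care is the identification $\Delta(R)=J(R)$ in the forward direction. It combines the theorem $\Delta(R/I)=\Delta(R)/I$ for $I=J(R)$ with Theorem~\ref{thm3.4} applied to the $m$-potent ring $R/J(R)$. The hypotheses $m\not\equiv 1 \pmod 3$ and $m\not\equiv 1 \pmod 8$ are used only to invoke Proposition~\ref{mnp3.10}; the converse does not need them.
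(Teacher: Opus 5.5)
Your proof is correct. The converse direction, and the inclusion $Nil(R)\subseteq J(R)$ via $\bar n=\bar n^{m^k}$ in the $m$-potent ring $R/J(R)$, coincide with the paper's argument. You differ from the paper in how you obtain $\Delta(R)\subseteq Nil(R)$.

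You argue structurally. You combine $\Delta(R/J(R))=\Delta(R)/J(R)$ with Theorem~\ref{thm3.4} applied to $R/J(R)$ to get $\Delta(R)=J(R)$, and then use that $J(R)$ is nil.

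The paper argues elementwise. For $a\in\Delta(R)$ with strongly $m$-nil clean decomposition $a=f+n$, it writes $1-f^{m-1}=u+n$ with $u=(f-f^{m-1}+1)-a\in U(R)+\Delta(R)=U(R)$. It deduces that the idempotent $1-f^{m-1}$ is a unit, so $f=0$ and $a=n$ is nilpotent. Only afterwards does it invoke Proposition~\ref{mnp3.10}, to close the chain $Nil(R)\subseteq J(R)\subseteq\Delta(R)\subseteq Nil(R)$.

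The paper's route gives a slightly more general fact. Its first step never uses Proposition~\ref{mnp3.10}, so $\Delta(R)\subseteq Nil(R)$ holds in every strongly $m$-nil clean ring, with no congruence conditions on $m$.

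Your route is shorter and more robust. It reuses results already established in the paper, yields $\Delta(R)=J(R)$ at once, and avoids the unit computation. That computation, as written in the paper, has an unstated step: one needs that $u=1-f^{m-1}-n$ commutes with $n$, so that $1+u^{-1}n$ is a unit. It also cites the inverse of $f+f^{m-1}-1$ where the invertibility of $f-f^{m-1}+1$ is what is needed.
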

\begin{proof}
 We first show that every element of $\Delta(R)$ is nilpotent. Let $a \in \Delta(R)$. Since $R$ is a strongly $m$-nil clean ring, there exist an $m$-potent element $f$ and a nilpotent element $n$ in $R$ such that $a=f+n$ with $fn=nf$. Consider the element $1-f^{m-1}=\bigl(f+(1-f^{m-1})\bigr)-f=v-f$, where $v=f+(1-f^{m-1})$ is a unit in $R$. Since $\bigl(f+f^{m-1}-1\bigr)\bigl(f^{m-2}-f^{m-1}+1\bigr)=1$, we have $1-f^{m-1}=v-f=v-(a-n)=(v-a)+n$. Since $a\in\Delta(R)$ and $v\in U(R)$, we have
$u=v-a\in U(R)+\Delta(R)=U(R)$ by Lemma \ref{ls2.5}. Therefore, $1-f^{m-1}=u+n=u\bigl(1+u^{-1}n\bigr)\in U(R)$. On the other hand, $1-f^{m-1}$ is an idempotent. Hence $1-f^{m-1}=1$. This implies that $f^{m-1}=0$. Therefore, $f=f^{m-1}f=0$. Thus $a=n\in Nil(R)$. Consequently, each element of $\Delta(R)$ is nilpotent. On the other hand, by Proposition \ref{mnp3.10}, $R/J(R)$ is $m$-potent ring. Then $Nil(R) \subseteq J(R)$. Since $r \in Nil(R)$, so $r^k=0$ for some integer $k$. Then $(r+J(R))^k=J(R)$. Again $R/J(R)$ is an $m$-potent ring. This implies that $r+J(R)=J(R)$ which shows that $r \in J(R)$.
Thus it follows that $Nil(R) \subseteq J(R) \subseteq \Delta(R)$. Hence $Nil(R)=\Delta(R)$. Hence every strongly $m$-nil clean decomposition is a strongly $m$-$\Delta$-clean decomposition. Therefore, $R$ is a strongly $m$-$\Delta$-clean ring and $\Delta(R)$ is nil.

Conversely, suppose that $a\in R$. Since $R$ is a strongly $m$-$\Delta$-clean ring, there exist an $m$-potent element $f\in R$ and an element $d\in\Delta(R)$ such that
$a=f+d$ with $fd=df$. By the assumption $\Delta(R)\subseteq Nil(R)$, it follows that $d$ is nilpotent. Hence
$R$ is a strongly $m$-nil clean ring. \end{proof}

\begin{Theorem}
Let $R=\begin{pmatrix} A & M \\ N & B \end{pmatrix}$ be a Morita context and $m\ge 2$ be an integer satisfying $m \not\equiv 1~(mod~3)$ and $m \not\equiv 1~(mod~8)$. Suppose that $J(A)$ and $J(B)$ are nilpotent and $\Delta(A/J(A))=\Delta(B/J(B))=0$. Then the following are equivalent :

$(i)$ $R$ is a strongly $m$-$\Delta$-clean ring.

$(ii)$ $A$ and $B$ are strongly $m$-$\Delta$-clean rings with $MN \subseteq J(A)$ and $NM \subseteq J(B)$.
\end{Theorem}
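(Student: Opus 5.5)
(i)$\Rightarrow$(ii) is exactly Lemma~\ref{lem3.9}, so only (ii)$\Rightarrow$(i) needs work. The plan is to route everything through strongly $m$-nil clean rings, using Theorem~\ref{lem3.10} and Proposition~\ref{mnp3.10}.

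\textbf{Step 1: $A$ and $B$ are strongly $m$-nil clean.} By Proposition~\ref{pd2.8}, $A/J(A)$ is strongly $m$-$\Delta$-clean. Since $\Delta(A/J(A))=0$, Theorem~\ref{thm3.4} shows that $A/J(A)$ is $m$-potent. As $J(A)$ is nilpotent, hence nil, Proposition~\ref{mnp3.10} gives that $A$ is strongly $m$-nil clean. The same argument applies to $B$.

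\textbf{Step 2: $R$ is strongly $m$-nil clean.} Put $I=\begin{pmatrix} J(A) & M\\ N & J(B)\end{pmatrix}$. Using $MN\subseteq J(A)$, $NM\subseteq J(B)$ and the bimodule/pairing identities, $I$ is a two-sided ideal of $R$. By \cite[Theorem~1]{s}, the same result used in Lemma~\ref{lem3.9}, $I=J(R)$, and then $R/J(R)\cong A/J(A)\times B/J(B)$, which is $m$-potent.

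It remains to show that $J(R)$ is nil. I expect this to be the main obstacle. My plan is to show $I$ is nilpotent. Suppose $J(A)^k=0$ and $J(B)^k=0$. Each entry of a product of $N$ elements of $I$ is a sum of words that alternate through the corners, and the corner-$A$ factors group into products of elements of $J(A)\cup MN\subseteq J(A)$ (similarly in $B$). Since $MJ(B)\subseteq M$, $MN\subseteq J(A)$, and so on, a word of length $N$ contains at least roughly $N/2$ consecutive factors lying in $J(A)$ or in $J(B)$ after regrouping. Taking $N\ge 4k$ or so, every word vanishes. This combinatorial bookkeeping is the delicate part; I would organize it by bounding the number of "returns" to each corner.

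With $J(R)$ nilpotent and $R/J(R)$ $m$-potent, Proposition~\ref{mnp3.10} shows that $R$ is strongly $m$-nil clean.

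\textbf{Step 3: conclude.} By Theorem~\ref{lem3.10}, a strongly $m$-nil clean ring is strongly $m$-$\Delta$-clean. Hence $R$ is strongly $m$-$\Delta$-clean, which proves (i).
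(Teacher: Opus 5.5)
Your proposal is correct, but Step 2 takes a different route from the paper. Steps 1 and 3 match the paper's proof.

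\textbf{The paper's Step 2.} Having shown that $A$ and $B$ are strongly $m$-nil clean, the paper notes that $MN$ and $NM$ are nilpotent. It then invokes the Morita-context transfer result \cite[Theorem~4.7]{b1} as a black box to conclude that $R$ is strongly $m$-nil clean.

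\textbf{Your Step 2 and what it buys.} You apply Proposition~\ref{mnp3.10} to $R$ itself, computing $J(R)$ and $R/J(R)$ by hand. This is more self-contained, and it shows that only the $m$-potency of $A/J(A)$ and $B/J(B)$ is actually used, not the strong $m$-nil cleanness of $A$ and $B$. The price is the nilpotency computation, which you left as a sketch.

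\textbf{Completing the nilpotency step.} It goes through, with a better bound than $4k$. A word of length $n$ in an entry of a product of elements of $I$ corresponds to a path in $\{1,2\}$. Say it has $\alpha$ letters from $J(A)$, $\beta$ from $J(B)$, and $p\ge 1$ switch letters from $M\cup N$. Let $e_B$ (resp.\ $e_A$) be the number of complete excursions from corner $A$ into corner $B$ and back (resp.\ from $B$ into $A$).
\begin{itemize}
\item The stretch between the first and last visits to corner $A$ regroups as a product of $\alpha+e_B$ elements of $J(A)$. Each excursion $x b_1\cdots b_r y$ with $x\in M$, $b_i\in J(B)$, $y\in N$ lies in $MN\subseteq J(A)$.
\item Symmetrically, the $B$-stretch is a product of $\beta+e_A$ elements of $J(B)$.
\item Each pair of consecutive switch letters bounds exactly one excursion, so $e_A+e_B=p-1$.
\end{itemize}
The two counts therefore add up to $n-1$. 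Hence $I^{2k+1}=0$ whenever $J(A)^k=J(B)^k=0$; the case $p=0$ is trivial.

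\textbf{A simplification.} You do not need \cite[Theorem~1]{s} to identify $J(R)$. The nilpotent ideal $I$ lies in $J(R)$, and $R/I\cong A/J(A)\times B/J(B)$ is semiprimitive, so $J(R)=I$.
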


\begin{proof}
$(i) \Longrightarrow (ii)$. This implication follows immediately from Lemma \ref{lem3.9}.

$(ii) \Longrightarrow (i)$. Assume that $A$ and $B$ are strongly $m$-$\Delta$-clean rings satisfying $MN\subseteq J(A)$ and $NM\subseteq J(B)$. By Proposition \ref{pd2.8}, the factor rings $A/J(A)$ and $B/J(B)$ are strongly $m$-$\Delta$-clean rings. Since $\Delta(A/J(A))=\Delta(B/J(B))=0$, from Theorem~\ref{thm3.4}, it follows that $A/J(A)$ and $B/J(B)$ are $m$-potent rings. As $J(A)$ and $J(B)$ are nilpotent ideals, they are nil ideals. Hence from Proposition \ref{mnp3.10}, we find that both $A$ and $B$ are strongly $m$-nil clean rings. Furthermore, since $MN\subseteq J(A)$, $NM\subseteq J(B)$ and both $J(A)$ and $J(B)$ are nilpotent ideals, it follows that $MN$ and $NM$ are nilpotent. Applying \cite[Theorem 4.7]{b1}, we conclude that the Morita context ring $R=\begin{pmatrix}
A & M\\
N & B
\end{pmatrix}$ is a strongly $m$-nil clean ring. Finally, by Theorem \ref{lem3.10}, every strongly $m$-nil clean ring is strongly $m$-$\Delta$-clean ring. Therefore, $R$ is a strongly $m$-$\Delta$-clean ring.
\end{proof}

\vspace{0.5cm}

\noindent
\textbf{Acknowledgment}

\noindent
The first author acknowledges the financial support received as a Senior Research Fellow (SRF) from the University Grants Commission (UGC), Government of India (Award Letter No. 211610081351).

\noindent
\textbf{Declarations} 

\noindent
\textbf{Conflict of interest:} All authors declare that they have no conflicts of interest.

\end{document}